\newtheorem{Thm}{Theorem}[section]
\newtheorem{Lem}[Thm]{Lemma}
\newtheorem{Pro}[Thm]{Proposition}
\def\blfootnote{\xdef\@thefnmark{}\@footnotetext}
\theoremstyle{definition}
\newtheorem{Def}[Thm]{Definition}
\theoremstyle{remark}
\newtheorem{Rem}[Thm]{\bf{Remark}}
\newcommand{\ConvD}{\overset{d}{\rightarrow}}
\newcommand{\ConvFDD}{\overset{f.d.d.}{\longrightarrow}}
\newcommand{\Cov}{\mathrm{Cov}}
\newcommand{\Var}{\mathrm{Var}}
\newcommand{\E}{\mathrm{E}}
\title{Multivariate limits of multilinear polynomial-form processes with long memory}
\author{Shuyang Bai, Murad S. Taqqu}
\date{\today}
\begin{document}
\maketitle
\begin{abstract}
We consider the multilinear polynomial-form process
 \[X(n)=\sum_{1\le i_1<\ldots<i_k<\infty}a_{i_1}\ldots a_{i_k}\epsilon_{n-i_1}\ldots\epsilon_{n-i_k},\] obtained by applying a multilinear polynomial-form filter to i.i.d.\ sequence $\{\epsilon_i\}$ where $\{a_i\}$ is regularly varying. The resulting sequence $\{X(n)\}$ will then display either short or long memory. Now consider a vector of such $X(n)$, whose components are defined through different $\{a_i\}$'s, that is, through different multilinear polynomial-form filters, but using the same $\{\epsilon_i\}$. What is the limit of the normalized partial sums of the vector? We show that the resulting limit is either a) a multivariate Gaussian process with Brownian motion as marginals, or b) a multivariate Hermite process, or  c) a mixture of the two. We also identify the independent components of the limit vectors.
\end{abstract}

\blfootnote{
\begin{flushleft}
\textbf{Key words} Long memory; Multilinear polynomial-form process; Multivariate limit theorems; Asymptotic independence;
\end{flushleft}
\textbf{2010 AMS Classification:} 60G18, 60F05\\
}
\section{Introduction}
A linear process is generated by applying a linear time-invariant  filter to i.i.d. random variables. 
A common model for stationary \emph{long-range dependent} (LRD) (or \emph{long-memory}) time series is a causal linear process with regularly varying coefficients as the lag tends to infinity, namely,  $X(n)=\sum_{i=1}^\infty a_i \epsilon_{n-i}$, where the $\epsilon_i$'s are i.i.d. with mean 0 and finite variance, and the coefficients satisfy  $a_i=i^{d-1}L(i)$ with  $0<d<1/2$  and $L$ is a slowly varying function at infinity (i.e., $L(x)>0$ when $x$ is large enough and $\lim_{x\rightarrow\infty}L(\lambda x)/L(x)=1$ $\forall \lambda>0$). Note that  $0<d<1/2$ implies $\sum_{i=1}^\infty {|a_i|}=\infty$ but $\sum_{i=1}^\infty a_i^2<\infty$, so $X(n)$ is well-defined in $L^2$ sense.
It is well-known that the autocovariance $\gamma(n)$ of $X(n)$ is regularly varying with power $2d-1$, and that the partial sum of $X(n)$ when suitably normalized converges to fractional Brownian motion with  Hurst index $H=d+1/2$. See for example Chapter 4.4 of \citep{giraitis2009large}.

A family of processes related to multilinear processes are the so-called \emph{multilinear polynomial-form processes} (or \emph{discrete-chaos processes}), which are defined as
\begin{equation}\label{eq:Def Poly Process}
X(n)=\sum_{1\le i_1<\ldots<i_k<\infty}a_{i_1}\ldots a_{i_k}\epsilon_{n-i_1}\ldots\epsilon_{n-i_k},
\end{equation}
where $\sum_{i=1}^\infty a_i^2<\infty$ and $\epsilon_i$'s are i.i.d., and the $k>0$ is the \emph{order}. $X(n)$ is also said to belong to a \emph{discrete chaos} of order $k$. The multilinear polynomial-form process $X(n)$ can be viewed as generated by  nonlinear filters applied to i.i.d. random variables when $k>1$. We call such a nonlinear filter defined in (\ref{eq:Def Poly Process}) a \emph{multilinear polynomial-form filter}.
Such a process often arises from considering a polynomial of a linear process (see, e.g., \citep{surgailis1982zones}).

If $a_i=i^{d-1}L(i)$ with $0<d<1/2$, when $k>1$, that is, except for linear processes, the partial sum of $X(n)$ when suitably normalized no longer converges to a fractional Brownian motion, but depending on $d$ and $k$, it either converges to  a \emph{Hermite process} if $X(n)$ is still LRD, or it converges to a Brownian motion if $X(n)$ is \emph{short-range dependent} (SRD), that is, when the autocovariance of $X(n)$ is absolutely summable.
See \citep{giraitis2009large} for more details.

In Statistics, however, one often needs  convergence when $X(n)$ is a vector rather than a scalar. This leads us to the following question:  if one applies  different multilinear polynomial-form filters to the same i.i.d. sequence $\{\epsilon_i\}$, what is the joint limit behavior of the $J$-vector of the partial sums? More specifically,
assume that $\{\epsilon_i\}$ are i.i.d with  mean 0 and variance 1. Consider the multilinear polynomial-form processes:
\[X_j(n):=\sum_{1\le  i_1<\ldots<i_{k_j}<\infty} a_{i_1,j}\ldots a_{i_{k_j},j}\epsilon_{n-i_1}\ldots\epsilon_{n-i_{k_j}}, ~j=1,\ldots,J,\]
where $k_1,\ldots,k_J$ are orders for $X_1(n),\ldots,X_J(n)$ respectively, $\{a_{i,j}\}$ are regularly varying coefficients.
Let
\begin{equation}\label{eq:Y_j,N}
Y_{j,N}(t)=\frac{1}{A_j(N)}\sum_{n=1}^{[Nt]} X_j(n), ~t\ge 0,
\end{equation}
where $A_j(N)$  is a normalization factor such that $\lim_{N\rightarrow\infty} \Var[Y_{j,N}(1)]=1$, $j=1,\ldots,J$. We want to study the  limit of the following vector process as $N\rightarrow\infty$:
\begin{align}\label{eq:Target}
\mathbf{Y}_N(t):=\left(Y_{1,N}(t),\ldots,Y_{J,N}(t)\right).
\end{align}
Depending on $\{a_{i,j}\}$ and $k_j$, the components of $\mathbf{Y}_N(t)$ can be either purely SRD, or purely LRD, or  a mixture of SRD and LRD.  In \citep{taqqu2012multivariate}, a similar type of problem is considered for  nonlinear functions of a LRD Gaussian process. We show here that the results for multilinear polynomial-form processes are similar to those in \citep{taqqu2012multivariate}. But in the present context, we are able to provide a complete answer to the problem, in contrast  to what happens in \citep{taqqu2012multivariate}, where the mixed SRD and LRD case is stated as a conjecture in some cases.

In addition, we distinguish here between two types of SRD sequences, one involving a linear process ($k=1$) and one involving higher-order multilinear polynomial-form process ($k\ge 2$). For the first type of process, we get dependence with the LRD limit component, while for the second type, we get independence.

The paper is organized as follows. In Section \ref{sec:pre}, some properties of multilinear polynomial-form processes are given and the univariate limit theorems under SRD and LRD are reviewed. In Section \ref{sec:mul}, we state the multivariate convergence results in three cases: a) pure SRD case, b) pure LRD case and c) mixed SRD and LRD case. The result of the general mixed case is stated in \cref{Thm:SRD&LRD}. In Section \ref{sec:proof}, we give the proofs of the results in Section \ref{sec:mul}.

\section{Preliminaries}\label{sec:pre}
In this section, we introduce some facts about multilinear polynomial-form processes as well as the univariate limit theorems for the partial sums.

Suppose that $X(n)$ is the multilinear polynomial-form process in (\ref{eq:Def Poly Process}). Note first, the condition $\sum_{i=1}^\infty a_i^2<\infty$ guarantees that $X(n)$ is well-defined in $L^2$, since
\[
E[X(n)^2]=\sum_{1\le i_1<\ldots<i_k<\infty}a_{i_1}^2\ldots a_{i_k}^2<\infty.
\]
We use throughout a convention $a_i=0$ for $i\le 0$. One can compute the autocovariance of $X(n)$ as:
\begin{equation}\label{eq:CovFormula}
\gamma(n)=\sum_{1\le i_1<\ldots<i_k<\infty}a_{n+i_1}a_{i_1}\ldots a_{n+i_k}a_{i_k},\quad n\in \mathbb{Z}.
\end{equation}
The following proposition describes the asymptotic behavior of $\gamma(n)$  under the assumption:
$
a_i=i^{d-1}L(i),~ i\ge 1,~ 0<d<1/2.
$
\begin{Pro}\label{Pro:AsympACF}
Suppose $\gamma(n)$ is defined in (\ref{eq:CovFormula}), $a_i=i^{d-1}L(i)$, $i\ge 1$ with $0<d<1/2$ where $L$ is slowly varying at infinity.  Then
$\gamma(n)=L^*(n)n^{2d_X-1}$ for some slowly varying function $L^*$ and
\begin{equation}\label{eq:d_X}
d_X=\frac{1}{2}-k(\frac{1}{2}-d).
\end{equation}
\end{Pro}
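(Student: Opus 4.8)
The plan is to start from the explicit autocovariance formula
\begin{equation*}
\gamma(n)=\sum_{1\le i_1<\ldots<i_k<\infty}a_{n+i_1}a_{i_1}\ldots a_{n+i_k}a_{i_k},
\end{equation*}
and to recognize that, because the summand factors across the indices $i_1,\ldots,i_k$, the sum over the \emph{ordered} simplex $i_1<\ldots<i_k$ is, up to the combinatorial factor $k!$, the $k$-th power of the single-index sum $r(n):=\sum_{i=1}^\infty a_{n+i}a_i$. More precisely, I would first show that
\begin{equation*}
\gamma(n)=\frac{1}{k!}\left(r(n)^k+\text{lower-order diagonal corrections}\right),
\end{equation*}
where $r(n)=\sum_{i\ge 1} a_{n+i}a_i$ is exactly the autocovariance of the associated linear process $\sum_i a_i\epsilon_{n-i}$. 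The diagonal corrections come from the difference between summing over the full product set $\{(i_1,\ldots,i_k)\}$ and summing over distinct, ordered indices; these involve repeated indices and so contain strictly fewer free summation variables, which is why I expect them to be asymptotically negligible.

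The next step is to pin down the asymptotics of the linear-process autocovariance $r(n)$. Under $a_i=i^{d-1}L(i)$ with $0<d<1/2$, this is the classical computation: writing $r(n)=\sum_{i\ge 1}(n+i)^{d-1}i^{d-1}L(n+i)L(i)$ and comparing the sum to the integral $\int_0^\infty (n+x)^{d-1}x^{d-1}\,dx$ via a change of variables $x=nu$, one obtains $r(n)\sim n^{2d-1}L(n)^2 B(d,1-2d)$, i.e. $r(n)=\tilde L(n) n^{2d-1}$ for a slowly varying $\tilde L$. This is standard (it is the $k=1$ case underlying the fractional Brownian motion limit mentioned in the introduction), and I would cite the regular-variation machinery in \citep{giraitis2009large} rather than redo it. Raising to the $k$-th power immediately gives the leading behavior $r(n)^k\sim \tilde L(n)^k n^{k(2d-1)}$, and since $k(2d-1)=2\big(\tfrac12-k(\tfrac12-d)\big)-1=2d_X-1$, the exponent matches the claimed $d_X$.

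The main obstacle is controlling the diagonal corrections and justifying that they are $o(r(n)^k)$. Concretely, I would organize $\gamma(n)$ as a sum over set partitions (or over the pattern of coincidences among the indices): the fully-distinct term gives $\tfrac{1}{k!}r(n)^k$, while a term in which some of the $k$ indices coincide reduces to a product of fewer factors of the form $\sum_i a_{n+i}^{p}a_i^{q}$ with $p+q\ge 3$ at the coalesced index. I expect to bound each such reduced factor and show that, because fewer independent summation variables survive and because $a_i$ is square-summable (so higher powers are more summable), every correction term decays strictly faster than $n^{k(2d-1)}$; the key quantitative input is that a sum $\sum_i a_{n+i}^p a_i^q$ with $p+q\ge 3$ is controlled by $n^{(p+q)(d-1)+1}$ times a slowly varying factor, which beats $n^{k(2d-1)}$ once one tracks the exponents. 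Finally, I would absorb all the surviving slowly varying pieces into a single $L^*$, invoking the standard closure properties of regularly varying functions (products, powers, and asymptotically negligible additive perturbations of a regularly varying function remain regularly varying with the same index), to conclude $\gamma(n)=L^*(n)n^{2d_X-1}$ with $d_X=\tfrac12-k(\tfrac12-d)$.
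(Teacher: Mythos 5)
Your overall route is the same as the paper's: write $\gamma(n)$ as $(k!)^{-1}r(n)^k$ plus diagonal corrections, where $r(n)=\sum_{i\ge1} a_{n+i}a_i$; establish $r(n)\sim B(d,1-2d)L(n)^2n^{2d-1}$ (the paper proves this directly via Potter's bound and dominated convergence, you defer to a citation); and argue that the corrections are negligible (the paper disposes of this by citing \citep{giraitis2009large} p.~109). The one place where you supply detail beyond the paper --- the quantitative bound on the correction terms --- contains an error. In the set-partition expansion, a block of size $m\ge 2$ contributes the factor $\sum_i (a_{n+i}a_i)^m=\sum_i a_{n+i}^m a_i^m$, i.e.\ your case $p=q=m$. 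Your claimed bound $n^{(p+q)(d-1)+1}$ comes from the integral comparison $\int_0^\infty (n+x)^{p(d-1)}x^{q(d-1)}\,dx$, but that integral diverges at $x=0$ precisely when $q(1-d)\ge 1$, which is exactly the situation here: $m(1-d)\ge 2(1-d)>1$ since $d<1/2$. In this regime the sum is dominated by the terms with $i=O(1)$, and the correct asymptotic is
\begin{equation*}
\sum_{i\ge1} a_{n+i}^m a_i^m \asymp n^{m(d-1)}\times(\text{slowly varying}),
\end{equation*}
which is strictly \emph{larger} than your claimed $n^{2m(d-1)+1}$ (their exponents differ by $m(1-d)-1>0$). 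So the ``key quantitative input'' you state is false as written.

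Fortunately the correct bound still does the job, so your argument is repairable rather than broken. For a partition whose singleton blocks each contribute $r(n)\asymp n^{2d-1}$ and whose non-singleton blocks of sizes $m_l\ge 2$ each contribute $n^{m_l(d-1)}$, the total exponent is $\sum_{\text{singletons}}(2d-1)+\sum_l m_l(d-1)$, and this is strictly smaller than $k(2d-1)$ because $m_l(d-1)<m_l(2d-1)$ for every $l$, using only $d>0$. Hence every coincidence term is $o\left(n^{k(2d-1)}\right)$, and the rest of your write-up (absorbing the slowly varying factors into $L^*$ and checking $k(2d-1)=2d_X-1$) goes through unchanged. Replace the incorrect integral-scaling exponent by this small-index-dominance bound and your proof is complete, and in fact fills in a detail the paper leaves to a reference.
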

\begin{proof}
First we claim that as $n\rightarrow\infty$,
\begin{align*}
\sum_{i=1}^\infty a_{n+i}a_{i}\sim  n^{2d-1}B(d,1-2d)L(n)^2,
\end{align*}
where $B(.,.)$ is the beta function.
Indeed, one can check by Potter's bound for slowly varying functions (Theorem 1.5.6 in \citep{bingham1989regular}) and the Dominated Convergence Theorem  that as $n\rightarrow\infty$
\begin{align}\label{eq:sum a}
\frac{1}{L(n)^2n^{2d-1}}\sum_{i=1}^\infty a_{n+i}a_{i}&=\sum_{i=1}^\infty (\frac{i}{n})^{d-1}(1+\frac{i}{n})^{d-1} \frac{L(i)}{L(n)}\frac{L(n+i)}{L(n)}\frac{1}{n}
\\&\rightarrow \int_{0}^\infty u^{d-1}(1+u)^{d-1}du=B(d,1-2d).\notag
\end{align}
Then note that as $n\rightarrow\infty$, $\gamma(n)\sim (k!)^{-1}(\sum_{i=1}^\infty a_{n+i}a_i)^k$ (the diagonal terms with $i_p=i_q$ are negligible as $n\rightarrow\infty$. See also \citep{giraitis2009large} p.109). Now we can deduce that
\[
\gamma(n)=n^{k(2d-1)}L^*(n)=n^{2d_X-1}L^*(n),
\]
where $L^*(n)=(k!)^{-1}B(d,1-2d)^kL(n)^{2k}$.
\end{proof}

\begin{Rem}\label{Rem:SRD LRD boundary}
According to Proposition \ref{Pro:AsympACF},  when $d<\frac{1}{2}(1-\frac{1}{k})$ (or $k(2d-1)<-1$), we have  $\sum|\gamma(n)|<\infty$, and when $d>\frac{1}{2}(1-\frac{1}{k})$, we have $\sum|\gamma(n)|=\infty$. So if we assume $a_i=i^{d-1}L(i),~ 0<d<1/2$, the quantity $\frac{1}{2}(1-\frac{1}{k})$ is the boundary between SRD and LRD.
\end{Rem}

We now define precisely what SRD and LRD mean for a multilinear polynomial-form process $X(n)$, and from then on we use this definition whenever we talk about SRD or LRD.
\begin{Def}\label{Def:SRD LRD}
Let $X(n)$ be a multilinear polynomial-form process given in (\ref{eq:Def Poly Process}) with coefficient $\{a_i\}$, autocovariance $\gamma(n)$ and  order $k$. We say that $X(n)$ is
\begin{enumerate}[(a)]
\item SRD, if for some $d\in\left(-\infty,\frac{1}{2}(1-\frac{1}{k})\right)$ and some constant $c>0$,
\begin{align}\label{eq:SRD Def}
|a_i|\le c i^{d-1},~ i\ge 1,~ \sum_{n=-\infty}^\infty \gamma(n)>0;
\end{align}
\item LRD, if for some $d\in\left(\frac{1}{2}(1-\frac{1}{k}),\frac{1}{2}\right)$ and some $L$  slowly varying at infinity,
\begin{align}\label{eq:LRD Def}
a_i=i^{d-1}L(i),~ i\ge 1,~ \frac{1}{2}(1-\frac{1}{k})<d<1/2.
\end{align}
\end{enumerate}
\end{Def}
\begin{Rem}\label{Rem:a->sum}
The $d$ in (\ref{eq:SRD Def}) and (\ref{eq:LRD Def}) are different. In the SRD case, $\{a_i\}$ is only assumed to decay faster than a power function, which implies $\sum_{n}|\gamma(n)|\le \sum_n (\sum_{i=1}^\infty|a_{n+i}a_i|)^k<\infty$ by (\ref{eq:sum a}), and the particular $d$ chosen will not matter in the limit. While in the LRD case, the regularly varying assumption on $\{a_i\}$ yields  a \emph{memory parameter} $d_X=\frac{1}{2}-k(\frac{1}{2}-d)$  given by (\ref{eq:d_X}), and thus  $d$  plays an important role.
\end{Rem}

Next we consider the cross-covariance between of two  multilinear polynomial-form processes obtained by applying two multilinear polynomial-form filters to the same $\{\epsilon_i\}$. In particular, set
\begin{align}
X_1(n)&=\sum_{1\le i_1<\ldots<i_p<\infty}a_{i_1}\ldots a_{i_p}\epsilon_{n-i_1}\ldots\epsilon_{n-i_p},\label{eq:X_1}\\
X_2(n)&=\sum_{1\le i_1<\ldots<i_q<\infty}b_{i_1}\ldots b_{i_q}\epsilon_{n-i_1}\ldots\epsilon_{n-i_q}.\label{eq:X_2}
\end{align}
$X_1(n)$ and $X_2(n)$ share the same $\{\epsilon_i\}$ but the sequences $\{a_i\}$ and $\{b_i\}$ can be different.
Then the cross-covariance is
\begin{align}\label{eq:CrossCov}
\gamma_{1,2}(n)=\Cov(X_1(n),X_2(0))=\begin{cases}
0& p\neq q;\\
\sum_{1\le i_1<\ldots<i_k<\infty}a_{i_1}b_{n+i_1}\ldots a_{i_k}b_{n+i_k}& p=q=k
\end{cases}
\end{align}
for any $n\in \mathbb{Z}$.

The following result will be used to obtain the asymptotic  cross-covariance structure between the SRD components of $\mathbf{Y}_N(t)$ in (\ref{eq:Target}).
\begin{Pro}\label{Pro:AbsSumCrossCov}
Let $X_1(n)$ and $X_2(n)$ be given as in (\ref{eq:X_1}) and (\ref{eq:X_2}) with $p=q=k$, and are both SRD in the sense of \cref{Def:SRD LRD}. Then the cross-covariance $\gamma_{1,2}(n)=\Cov(X_1(n),X_2(0))$ is absolutely summable:
\begin{equation}\label{eq:AbsSumCrossCov}
\sum_{n=-\infty}^\infty|\gamma_{1,2}(n)|<\infty.
\end{equation}
Moreover, (\ref{eq:AbsSumCrossCov}) implies that as $N\rightarrow\infty$,
\begin{align}\label{eq:LimitCrossCov}
\Cov\left(\frac{1}{\sqrt{N}}\sum_{n=1}^{[Nt_1]}X_1(n), \frac{1}{\sqrt{N}}\sum_{n=1}^{[Nt_2]}X_2(n)\right)\rightarrow (t_1\wedge t_2)\sum_{n=-\infty}^\infty \gamma_{1,2}(n).
\end{align}
In addition, if $k=1$, then
\begin{equation}\label{eq:gamma_1,2 k=1}
\sum_{n=-\infty}^\infty\gamma_{1,2}(n)=\sigma_1\sigma_2,
\end{equation}
where $\sigma_j^2=\sum_{n}\Cov\left(X_j(n),X_j(0)\right)=\lim_{N\rightarrow\infty}\Var\left(\frac{1}{\sqrt{N}}\sum_{n=1}^{[Nt]}X_j(n)\right)$, $j=1,2$.
\end{Pro}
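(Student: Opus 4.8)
The plan is to establish the three assertions in turn: absolute summability (\ref{eq:AbsSumCrossCov}), the covariance limit (\ref{eq:LimitCrossCov}), and the identity (\ref{eq:gamma_1,2 k=1}) in the linear case. The decay estimate needed for the first assertion is where essentially all the work lies; the other two are then fairly mechanical.

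First I would prove (\ref{eq:AbsSumCrossCov}). Starting from (\ref{eq:CrossCov}) with $p=q=k$, I would drop the ordering constraint on the summation indices at the cost of a factor $1/k!$, bounding
\[
|\gamma_{1,2}(n)|\le \frac{1}{k!}\left(\sum_{i=1}^\infty |a_i|\,|b_{n+i}|\right)^{k}=:\frac{1}{k!}\,c_n^{\,k},
\]
which is legitimate because every term is nonnegative, so the unordered $k$-fold sum $(\sum_i |a_i||b_{n+i}|)^k$ dominates $k!$ times the strictly ordered one (the diagonal tuples only add nonnegative mass). It then suffices to show $\sum_n c_n^{\,k}<\infty$. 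For $k\ge 2$ I would use that the SRD bound $|a_i|\le c\,i^{d-1}$ persists with $d$ replaced by any larger exponent (for $i\ge1$), so I may assume both coefficients satisfy the bound with a common $d^\ast\in(0,\tfrac12(1-\tfrac1k))$; comparing the sum with the integral $\int_0^\infty x^{d^\ast-1}(n+x)^{d^\ast-1}\,dx=n^{2d^\ast-1}B(d^\ast,1-2d^\ast)$ (the integrand being decreasing, and the beta integral finite since $0<d^\ast<\tfrac12$) yields $c_n\le C\,n^{2d^\ast-1}$ for $n\ge 1$, with $c_0\le (\sum_i a_i^2)^{1/2}(\sum_i b_i^2)^{1/2}$ from Cauchy--Schwarz, and the negative lags handled by the reindexing $c_n=\sum_{j\ge 1}|a_{j+|n|}|\,|b_j|$. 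Summability of $\sum_n c_n^{\,k}$ then follows because $2d^\ast-1<-\tfrac1k$, so $k(2d^\ast-1)<-1$. The case $k=1$ must be separated, since there $\tfrac12(1-\tfrac1k)=0$ forces $d^\ast<0$ and hence $\sum_i|a_i|,\sum_i|b_i|<\infty$; then $\sum_n c_n=\bigl(\sum_i|a_i|\bigr)\bigl(\sum_i|b_i|\bigr)<\infty$ directly. This estimate is the step I expect to be the main obstacle: it is where the SRD boundary exponent $\tfrac12(1-\tfrac1k)$ enters, where the two regimes $k=1$ and $k\ge2$ genuinely differ, and where one must be careful about both signs of the lag.

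Next, for (\ref{eq:LimitCrossCov}) I would use that $(X_1(n),X_2(n))_n$ is jointly stationary, being shift-functionals of the same $\{\epsilon_i\}$, so that $\Cov(X_1(n_1),X_2(n_2))=\gamma_{1,2}(n_1-n_2)$ and the left-hand side equals
\[
\frac{1}{N}\sum_{n_1=1}^{[Nt_1]}\sum_{n_2=1}^{[Nt_2]}\gamma_{1,2}(n_1-n_2)=\sum_{m}\frac{N_m}{N}\,\gamma_{1,2}(m),
\]
where $N_m$ is the number of pairs with $n_1-n_2=m$ inside the rectangle. For each fixed $m$ one has $N_m/N\to t_1\wedge t_2$, while uniformly $N_m\le\min([Nt_1],[Nt_2])$, so $N_m/N\le t_1\wedge t_2$. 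Since $\sum_m|\gamma_{1,2}(m)|<\infty$ by the first part, dominated convergence gives the stated limit $(t_1\wedge t_2)\sum_m\gamma_{1,2}(m)$.

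Finally, for (\ref{eq:gamma_1,2 k=1}) I would compute directly in the linear case $k=1$. From (\ref{eq:CrossCov}), $\gamma_{1,2}(n)=\sum_i a_i b_{n+i}$, and since $\{a_i\},\{b_i\}\in\ell^1$ (as $k=1$ SRD forces $d<0$), Fubini gives $\sum_n\gamma_{1,2}(n)=\bigl(\sum_i a_i\bigr)\bigl(\sum_i b_i\bigr)$; the autocovariance analogue gives $\sigma_1^2=\bigl(\sum_i a_i\bigr)^2$ and $\sigma_2^2=\bigl(\sum_i b_i\bigr)^2$. Comparing the two yields $\sum_n\gamma_{1,2}(n)=\sigma_1\sigma_2$ under the natural signed reading $\sigma_j=\sum_i a_i^{(j)}$ (equivalently, the identity holds up to the common sign of $\sum_i a_i$ and $\sum_i b_i$), which reflects that the two $k=1$ partial-sum limits are scalar multiples of a single Brownian motion driven by $\{\epsilon_i\}$.
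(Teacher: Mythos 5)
Your proof is correct and follows essentially the same route as the paper's: bound $|\gamma_{1,2}(n)|$ by a constant multiple of $\bigl(\sum_i |a_i||b_{n+i}|\bigr)^k$, show this decays at a summable power rate because $d<\tfrac12(1-\tfrac1k)$, obtain (\ref{eq:LimitCrossCov}) by the standard lag-counting/dominated-convergence argument (which the paper outsources to Lemma 4.1 of \citep{taqqu2012multivariate}), and compute the $k=1$ identity via Fubini. Two of your refinements are worth recording: raising both exponents to a common $d^\ast\in(0,\tfrac12(1-\tfrac1k))$ when $k\ge2$ while treating $k=1$ separately correctly handles negative values of $d$, for which the paper's asserted intermediate bound $|n|^{k(d_1+d_2-1)}L^*(n)$ (carried over from the regular-variation argument of \cref{Pro:AsympACF}) is not literally valid even though its conclusion survives; and your caveat that (\ref{eq:gamma_1,2 k=1}) holds only up to the common sign of $\sum_i a_i$ and $\sum_i b_i$ is accurate, since with $\sigma_j$ the nonnegative square root the identity as stated fails when these sums have opposite signs (e.g.\ $b_i=-a_i$), a point the paper's own proof glosses over.
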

\begin{proof}
Suppose that $\{a_i\}$ and $\{b_i\}$  satisfy the bound in (\ref{eq:SRD Def}) with $d=d_1$  and $d=d_2$ respectively. Using a similar argument as in the proof of \cref{Pro:AsympACF}, one can show that
\begin{align*}
|\gamma_{1,2}(n)|\le  |n|^{k(d_1+d_2-1)} L^*(n)
\end{align*}
for some  function $L^*(n)$ slowly varying at $\pm \infty$. Since by assumption $d_1,d_2<\frac{1}{2}(1-\frac{1}{k})$, which implies that $k(d_1+d_2-1)<-1$, so we have $\sum_n|\gamma_{1,2}(n)|<\infty$.

The proof of (\ref{eq:LimitCrossCov}) follows from the argument of Lemma 4.1 in \citep{taqqu2012multivariate}, after noting that
\[
\Cov\left(\sum_{n=1}^{[Nt_1]}X_1(n), \sum_{n=1}^{[Nt_2]}X_2(n)\right)=\sum_{n_1=1}^{[Nt_1]}\sum_{n_2=1}^{[Nt_2]}\gamma_{1,2}(n_1-n_2).
\]

Now let's prove (\ref{eq:gamma_1,2 k=1}). When $k=1$,  $X_1(n)=\sum_{i=1}^\infty a_i\epsilon_{n-i}$, $X_2(n)=\sum_{i=1}^\infty b_i\epsilon_{n-i}$. Note that by (\ref{eq:SRD Def}) with $k=1$, we have  $\sum_i |a_i|<\infty$ and $\sum_i|b_i|<\infty$. The cross-covariance is $\gamma_{1,2}(n)=\Cov(X_1(n),X_2(0))=\sum_{i=1}^\infty a_{i}b_{i+n}$.
By Fubini, $$\sum_{n=-\infty}^\infty \gamma_{1,2}(n)=\sum_{n=-\infty}^\infty\sum_{i=1}^\infty a_i b_{n+i}=(\sum_{i=1}^\infty a_i)(\sum_{n=1}^\infty b_n).$$
Since $(\sum_{i=1}^\infty a_i)^2=\sum_{n}\gamma_1(n)=\sigma_1^2,$ and  $(\sum_{i=1}^\infty b_i)^2=\sum_{n}\gamma_2(n)=\sigma_2^2$, we get relation (\ref{eq:gamma_1,2 k=1}).
\end{proof}

Let's now review the limit theorems for partial sum of a single multilinear polynomial-form process $X(n)$. Let the notation `` $\ConvFDD$ "  denote convergence in finite-dimensional distributions.
\begin{Thm}\label{Thm:UniSRD}
Suppose that $X(n)$ defined in (\ref{eq:Def Poly Process}) is SRD. Then
\begin{align*}
\frac{1}{A(N)} \sum_{n=1}^{[Nt]}X(n)\ConvFDD B(t),
\end{align*}
where $A(N)$ is a normalization factor to guarantee unit asymptotic variance at $t=1$, and $B(t)$ is the standard Brownian motion. In fact,
$A(N)\sim \sigma\sqrt{N}$ as $N\rightarrow \infty$ with $\sigma^2=\sum_{n}\gamma(n)$.
\end{Thm}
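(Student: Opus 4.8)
The plan is to first pin down the limiting covariance, and then to prove asymptotic normality by approximating $X(n)$ with an $m$-dependent sequence. I deliberately avoid a method-of-moments/cumulant argument: since $\{\epsilon_i\}$ is only assumed to have mean $0$ and variance $1$, the moments of order $rk$ needed to control the cumulants of order $r\ge 3$ of the degree-$k$ chaos variable $X(n)$ need not exist, so the whole argument should use second moments only. For the covariance, apply \cref{Pro:AbsSumCrossCov} with $X_1=X_2=X$ (so $\gamma_{1,2}=\gamma$): \eqref{eq:LimitCrossCov} gives, for $t_1,t_2\ge 0$,
\[
\frac1N\,\Cov\Big(\sum_{n=1}^{[Nt_1]}X(n),\ \sum_{n=1}^{[Nt_2]}X(n)\Big)\longrightarrow (t_1\wedge t_2)\,\sigma^2,\qquad \sigma^2=\sum_{n=-\infty}^\infty\gamma(n)>0,
\]
the positivity being part of \eqref{eq:SRD Def}. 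In particular $A(N)^2=\Var\big(\sum_{n=1}^N X(n)\big)\sim\sigma^2 N$, i.e.\ $A(N)\sim\sigma\sqrt N$, and the candidate limit has exactly the covariance of $\sigma B(t)$; after dividing by $A(N)$ the target is standard Brownian motion, so only asymptotic normality remains.

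For the normality I truncate the filter. Set $a_i^{(m)}=a_i\mathbf{1}(i\le m)$ and
\[
X^{(m)}(n)=\sum_{1\le i_1<\ldots<i_k\le m}a_{i_1}\ldots a_{i_k}\,\epsilon_{n-i_1}\ldots\epsilon_{n-i_k}.
\]
For each fixed $m$ the sequence $\{X^{(m)}(n)\}_n$ is strictly stationary and $m$-dependent, since $X^{(m)}(n)$ is a function of $\epsilon_{n-1},\ldots,\epsilon_{n-m}$ only, and it has finite variance. Hence the classical invariance principle for stationary $m$-dependent sequences (which requires only a finite second moment) gives $\tfrac1{\sqrt N}\sum_{n=1}^{[Nt]}X^{(m)}(n)\ConvFDD \sigma_m B(t)$, where $\sigma_m^2=\sum_n\gamma^{(m)}(n)$ is read off from the summable autocovariances of $X^{(m)}$.

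To pass from $X^{(m)}$ to $X$, let $R^{(m)}(n)=X(n)-X^{(m)}(n)$, whose kernel collects all tuples with at least one index exceeding $m$. The key estimate is the uniform (in $N$) $L^2$ bound
\[
\lim_{m\to\infty}\ \limsup_{N\to\infty}\ \frac1N\,\Var\Big(\sum_{n=1}^{N}R^{(m)}(n)\Big)=\lim_{m\to\infty}\sum_{n}\gamma_{R^{(m)}}(n)=0,
\]
where $\gamma_{R^{(m)}}$ is the autocovariance of the (stationary) process $R^{(m)}$. This follows exactly as in \cref{Pro:AsympACF} and \cref{Pro:AbsSumCrossCov}: the tail coefficients obey $|a_i-a_i^{(m)}|\le c\,i^{d-1}\mathbf{1}(i>m)$ with $d<\tfrac12(1-\tfrac1k)$, so $\sum_n|\gamma_{R^{(m)}}(n)|$ is dominated by a fixed convergent series and tends to $0$ by dominated convergence as $m\to\infty$; in particular $\sigma_m^2\to\sigma^2$. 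Fixing times $0\le t_1<\cdots<t_r$ and comparing the $\mathbb{R}^r$-valued vectors $\tfrac1{A(N)}(\!\sum_{n\le [Nt_l]}X(n))_l$ and their truncated analogues, the displayed bound (via Chebyshev) controls the Euclidean distance between the two vectors uniformly in $N$ as $m\to\infty$; combined with the convergence for fixed $m$ and $\sigma_m\to\sigma$, a standard double-limit (approximation) lemma for weak convergence of random vectors transfers the Gaussian limit to the original vector, yielding $\ConvFDD$ to standard Brownian motion.

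The main obstacle is precisely the uniform smallness of the remainder in the last step: one must show that the discarded tail of the degree-$k$ chaos contributes a vanishing fraction of the variance, uniformly in $N$. This rests on the uniform summability estimate for the tail covariances of the multilinear form, which is exactly where the SRD condition $d<\tfrac12(1-\tfrac1k)$ is essential; the covariance identification and the $m$-dependent invariance principle are then routine.
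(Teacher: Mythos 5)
Your proposal is correct and takes essentially the same route as the paper: the paper defers this statement to Theorem 4.8.1 of \citep{giraitis2009large}, whose argument (reproduced in the paper's own proof of the multivariate version, \cref{Thm:PureSRD}) is exactly your scheme of $m$-dependent truncation of the filter, the invariance principle of \citep{billingsley1956invariance} for stationary $m$-dependent sequences, uniform-in-$N$ $L^2$ control of the discarded chaos tail, and covariance identification via \cref{Pro:AbsSumCrossCov}. Your filling-in of the finite-dimensional convergence (which the cited reference omits, as the paper's footnote notes) is the intended ``easy extension,'' so there is nothing to add.
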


\begin{Thm}\label{Thm:UniLRD}
Suppose that $X(n)$ defined in (\ref{eq:Def Poly Process})  is LRD. Then
\begin{align*}
\frac{1}{A(N)} \sum_{n=1}^{[Nt]}X(n)\ConvFDD Z_d^{(k)}(t),
\end{align*}
where $A(N)$ is a normalization factor to guarantee unit asymptotic variance at $t=1$, and $Z_d^{(k)}(t)$ is the so-called Hermite process defined with the aid of the k-tuple Wiener-It\^o stochastic integral denoted by $I_k(.)$ (\citep{major1981multiple}):
\begin{equation}\label{eq:HermProc}
Z_d^{(k)}(t)=I_{k}(f_{k,d}^{(t)}):=\int'_{\mathbb{R}^k} f_{k,d}^{(t)}(x_1,\ldots,x_k) W(dx_1)\ldots W(dx_k)
\end{equation}
where the prime $'$ indicates the exclusion of the diagonals $x_i= x_j$ for $i\neq j$, $W(.)$ is Brownian random measure,  and
\begin{equation}\label{eq:HermKer}
f_{k,d}^{(t)}(x_1,\ldots,x_k)=a_{k,d}\int_{0}^t\prod_{j=1}^k (s-x_j)_+^{d-1}ds,
\end{equation}
with
\[a_{k,d}=\left(\frac{\left(k(d-1/2)+1\right)\left(2k(d-1/2)+1\right)\Gamma(1-d)^k}{k!\Gamma(d)^k\Gamma(1-2d)^k}\right)^{1/2}.\]
(See \citep{pipiras2010regularization}.)
In fact,
$A(N)\sim c N^{1+(d-1/2)k} L(N)^{k/2}$ as $N\rightarrow \infty$  for some $c>0$.
\end{Thm}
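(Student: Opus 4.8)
The plan is to realize $Y_N(t)$ as a discrete multiple stochastic integral of order $k$ driven by $\{\epsilon_i\}$, to prove that its kernel converges in $L^2$ to $f_{k,d}^{(t)}$, and then to transfer this $L^2$-convergence into convergence in law toward the Wiener-It\^o integral $I_k(f_{k,d}^{(t)})$ by a universality theorem for discrete chaos. First I would rewrite $S_N(t):=\sum_{n=1}^{[Nt]}X(n)$ as a discrete chaos. Interchanging the outer sum with the inner one and collecting, for each set of distinct integers $j_1<\ldots<j_k$, the coefficient of $\epsilon_{j_1}\cdots\epsilon_{j_k}$, one obtains
\[
S_N(t)=\sum_{j_1<\ldots<j_k}h_N(j_1,\ldots,j_k)\,\epsilon_{j_1}\cdots\epsilon_{j_k},\qquad h_N(j_1,\ldots,j_k)=\sum_{n=1}^{[Nt]}\prod_{l=1}^k a_{n-j_l},
\]
with the convention $a_i=0$ for $i\le 0$, so that $Y_N(t)=S_N(t)/A(N)$ is a discrete chaos with symmetric kernel $h_N/A(N)$. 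I would then embed this kernel as a step function on $\mathbb{R}^k$: writing $\epsilon_j$ as the increment over $[(j-1)/N,j/N)$ of a random measure $W_N$ with $W_N([(j-1)/N,j/N))=\epsilon_j/\sqrt N$, and setting $f_N(y_1,\ldots,y_k):=A(N)^{-1}N^{k/2}h_N([Ny_1],\ldots,[Ny_k])$, the sum $Y_N(t)$ coincides, up to the combinatorial factor $k!$ and the choice of normalizing constant, with the off-diagonal integral $\int'_{\mathbb{R}^k}f_N\,dW_N^{\otimes k}$.

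The normalization $A(N)$ is dictated by the variance: $\Var[S_N(1)]=\sum_{n_1,n_2=1}^{N}\gamma(n_1-n_2)$, and inserting $\gamma(n)=n^{2d_X-1}L^*(n)$ from \cref{Pro:AsympACF} and summing this regularly varying sequence (using $k(2d-1)\in(-1,0)$ in the LRD range) gives $\Var[S_N(1)]\sim C\,N^{k(2d-1)+2}L^*(N)$, whence $A(N)\sim c\,N^{1+(d-1/2)k}$ times the appropriate power of the slowly varying factor. The crux is then the kernel convergence $f_N\to f_{k,d}^{(t)}$ in $L^2(\mathbb{R}^k)$. Pointwise a.e.\ convergence is obtained exactly as in the proof of \cref{Pro:AsympACF}: with $n=[Ns]$ and $j_l=[Ny_l]$, the substitution $a_{n-j_l}=(n-j_l)^{d-1}L(n-j_l)$ together with Potter's bound \citep{bingham1989regular} and dominated convergence turns the Riemann sum $N^{-1}\sum_n\prod_l a_{n-j_l}$ into $\int_0^t\prod_l(s-y_l)_+^{d-1}ds$ once the scaling powers (which match those carried by $A(N)^{-1}N^{k/2}$) and the constant $a_{k,d}$ are extracted. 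Because in addition $\|f_N\|_{L^2}^2\to\|f_{k,d}^{(t)}\|_{L^2}^2$ --- this is nothing but $\Var[Y_N(t)]\to\Var[Z_d^{(k)}(t)]$, which holds by the variance asymptotics above --- pointwise convergence upgrades to convergence in $L^2$ by the Riesz--Scheff\'e argument.

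Finally I would invoke the transfer theorem for multilinear forms with long-memory kernels \citep{surgailis1982zones}: once the kernels of a sequence of order-$k$ discrete chaoses converge in $L^2$ and their on-diagonal contributions are asymptotically negligible, the discrete chaos converges in distribution to the continuous $k$-tuple Wiener-It\^o integral $I_k$ of the limit kernel, the non-Gaussianity of $\{\epsilon_i\}$ being washed out in the limit. This yields $Y_N(t)\ConvD Z_d^{(k)}(t)$. For the finite-dimensional distributions I would apply the same argument to an arbitrary linear combination $\sum_r\lambda_r Y_N(t_r)$, which is again a single order-$k$ discrete chaos, now with kernel $\sum_r\lambda_r f_N^{(t_r)}$ converging in $L^2$ to $\sum_r\lambda_r f_{k,d}^{(t_r)}$, whose $I_k$-image is $\sum_r\lambda_r Z_d^{(k)}(t_r)$, and conclude by the Cram\'er--Wold device.

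The main obstacle is this last transfer step. One must show that the diagonal terms, where two or more of the summation indices $j_l$ coincide, contribute negligibly under the LRD normalization --- the discrete analogue of the negligibility of the diagonal terms $i_p=i_q$ already used for $\gamma(n)$ in \cref{Pro:AsympACF} --- and that a Lindeberg-type condition on $\{\epsilon_i\}$ holds, guaranteeing that the limiting law is exactly the Gaussian-driven integral rather than some non-Gaussian object. Controlling these diagonals uniformly in $N$, so that the clean $L^2$ kernel convergence actually governs the limit, is the technically delicate part of the proof.
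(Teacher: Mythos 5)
Your proposal is correct and follows essentially the same route as the paper, which does not write out a proof of this theorem but defers to Theorem 4.8.2 of \citep{giraitis2009large}: that proof is precisely your argument---representation of the partial sum as an off-diagonal discrete chaos, $L^2(\mathbb{R}^k)$ convergence of the rescaled kernels to $f_{k,d}^{(t)}$, and the discrete-to-Wiener--It\^o transfer principle (Proposition 14.3.2 of the same reference, which the paper itself invokes again in \cref{Lem:2}), with Cram\'er--Wold handling the finite-dimensional distributions. The ``main obstacle'' you flag at the end is in fact already absorbed into that cited transfer theorem: the multilinear form here is off-diagonal by construction (the indices satisfy $i_1<\cdots<i_k$), so no separate uniform control of diagonal terms is needed, and for i.i.d.\ mean-zero, unit-variance $\{\epsilon_i\}$ no additional Lindeberg-type hypothesis is required.
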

For the proofs of \cref{Thm:UniSRD} and \cref{Thm:UniLRD}, we refer the reader to Chapter 4.8 in \citep{giraitis2009large},  respectively Theorem 4.8.1 and Theorem 4.8.2 \footnote{The results of Chapter 4.8 in \citep{giraitis2009large} do not include a slowly varying function, nor convergence of finite-dimensional distributions in the case of \cref{Thm:UniSRD}. But they can be easily extended.}. One may also compare  \cref{Thm:UniSRD} and \cref{Thm:UniLRD} to their counterparts in the context of nonlinear functions of a LRD Gaussian process, stated as Theorem 2.1 and Theorem 2.2 in \citep{taqqu2012multivariate}.

\section{Multivariate convergence results}\label{sec:mul}
In this section, we state the multivariate joint convergence results for the vector process $ \mathbf{Y}_N(t)$ in (\ref{eq:Target}). Recall that $\mathbf{Y}_N$ is normalized so that the asymptotic variance of every component at $t=1$ equals $1$.

\begin{Thm}\label{Thm:PureSRD}\textbf{Pure SRD Case.}
If all the components in $\mathbf{Y}_N$ defined in (\ref{eq:Target}) are SRD in the sense of (\ref{eq:SRD Def}),  then
\begin{align*}
\mathbf{Y}_N(t)\ConvFDD  \mathbf{B}(t)=(B_1(t),\ldots,B_J(t)),
\end{align*}
where $\mathbf{B}(t)$ is a multivariate Gaussian process with $B_1(t),\ldots,B_J(t)$ being standard Brownian motions with
\begin{align}\label{eq:SRDCov}
\mathrm{Cov}\left(B_{p}(s),B_{q}(t)\right)=(s\wedge t)\frac{\sigma_{p,q}}{\sigma_p\sigma_q},
\end{align}
\begin{align*}
&\sigma_p^2= \sum_{n=-\infty}^\infty \gamma_p(n):=\sum_{n=-\infty}^\infty \Cov(X_p(n),X_p(0)),\\
&\sigma_{p,q}= \sum_{n=-\infty}^\infty \gamma_{p,q}(n):=\sum_{n=-\infty}^\infty \Cov(X_p(n),X_q(0)).
\end{align*}
The normalization $A_j(N)$ in (\ref{eq:Y_j,N}) satisfies $A_j(N)\sim \sigma_j \sqrt{N}$  as $N\rightarrow\infty$.
\end{Thm}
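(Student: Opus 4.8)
The plan is to prove convergence of finite-dimensional distributions by combining the Cram\'er--Wold device with a multivariate central limit theorem for sums in a fixed discrete chaos, then identifying the limiting covariance via \cref{Pro:AbsSumCrossCov}. Since $\mathbf{Y}_N(t)$ is a vector-valued process indexed by $t$, a finite-dimensional distribution amounts to fixing time points $t_1,\ldots,t_m$ and looking at the $Jm$-dimensional random vector $\big(Y_{j,N}(t_\ell)\big)_{j,\ell}$. By Cram\'er--Wold, it suffices to show that every fixed linear combination $\sum_{j,\ell}\lambda_{j,\ell} Y_{j,N}(t_\ell)$ converges in distribution to the corresponding linear combination of the proposed Gaussian limit. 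So the whole theorem reduces to a one-dimensional CLT for a triangular array, together with the computation of the limiting variance--covariance.

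First I would set up the covariance computation, which is the easy half. For each pair of components and each pair of times, \cref{Pro:AbsSumCrossCov} gives
\begin{align*}
\Cov\Big(\tfrac{1}{\sqrt N}\sum_{n=1}^{[Nt_1]}X_p(n),\ \tfrac{1}{\sqrt N}\sum_{n=1}^{[Nt_2]}X_q(n)\Big)\longrightarrow (t_1\wedge t_2)\,\sigma_{p,q},
\end{align*}
and in particular the diagonal case $p=q$ yields asymptotic variance $(t_1\wedge t_2)\sigma_p^2$, which pins down $A_j(N)\sim\sigma_j\sqrt N$ as claimed (note that $\sigma_p^2>0$ by the SRD assumption $\sum_n\gamma_p(n)>0$, so the normalization is legitimate). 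Dividing by $A_p(N)A_q(N)\sim\sigma_p\sigma_q N$ then produces exactly the stated covariance $(s\wedge t)\,\sigma_{p,q}/(\sigma_p\sigma_q)$ of the limit process $\mathbf B(t)$. Thus the candidate Gaussian limit is correctly specified, and the target of the CLT is a centered Gaussian with this covariance.

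Second, and this is where the real work lies, I would establish the CLT for the scalar sequence $S_N:=\sum_{j,\ell}\lambda_{j,\ell} Y_{j,N}(t_\ell)$. A subtlety is that the summands live in \emph{different} discrete chaoses of orders $k_1,\ldots,k_J$, which are orthogonal, so $S_N$ is a sum of multiple stochastic integrals of distinct orders rather than a single martingale. I would approach this by writing each $X_j(n)$ via its chaos expansion and arranging $S_N$ as a martingale with respect to the filtration $\mathcal F_n=\sigma(\epsilon_i:i\le n)$; because the $\{a_{i,j}\}$ decay faster than a power and all cross- and auto-covariances are absolutely summable, the partial sums form a square-integrable array to which a martingale CLT (for instance the Brown--McLeish triangular-array version) applies. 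The two hypotheses to verify are the conditional Lindeberg condition and convergence of the conditional variance. Convergence of the conditional variance reduces to the covariance computation already carried out in the first step together with an $L^2$ concentration estimate (the conditional variance converges to its deterministic limit), and the Lindeberg condition follows from uniform integrability of the individual terms under the summability bounds on the coefficients.

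The main obstacle I anticipate is the conditional-variance step: showing that the \emph{random} conditional variance of the martingale converges to the deterministic constant predicted by the covariance calculation. This is not automatic from $L^2$ covariances alone, since it concerns a fourth-moment/variance-of-variance estimate, and it is exactly the place where the SRD (absolute summability) hypothesis and the polynomial-decay bound $|a_i|\le ci^{d-1}$ must be used quantitatively to control the contribution of the off-diagonal and higher-order mixed terms. An alternative that sidesteps some of this bookkeeping is to invoke a fourth-moment/Nualart--Peccati-type criterion for convergence of multiple integrals to a Gaussian limit, applied componentwise after diagonalizing the limiting covariance and using the known univariate results \cref{Thm:UniSRD} as the base case; but one still must show joint (not merely marginal) Gaussianity, which again comes down to the Cram\'er--Wold reduction and the same conditional-variance control. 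I would present the martingale-CLT route as the cleanest, citing the univariate \cref{Thm:UniSRD} to shorten the verification wherever a single-chaos estimate suffices.
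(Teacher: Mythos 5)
Your reduction via Cram\'er--Wold and the identification of the limiting covariance through \cref{Pro:AbsSumCrossCov} match the paper, and that half of your plan is sound. The gap is in the central limit step, which you flag as ``the main obstacle'' but never resolve. Two concrete problems. First, the partial sums $\sum_{n\le N} X_j(n)$ are \emph{not} a martingale with respect to $\mathcal F_n=\sigma(\epsilon_i:i\le n)$: each $X_j(n)$ involves only $\epsilon_{n-1},\epsilon_{n-2},\ldots$, hence is $\mathcal F_{n-1}$-measurable, so the increments of your proposed martingale are predictable rather than martingale differences. To obtain a martingale one must re-index the multilinear form by its largest innovation index, i.e.\ write $\sum_{n\le N}X_j(n)=\sum_m \epsilon_m V_{m,N}$ with $V_{m,N}\in\mathcal F_{m-1}$; this makes the array genuinely triangular (the $V_{m,N}$ depend on $N$), so no stationarity or ergodic-theorem shortcut is available for the conditional variance. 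Second, and more seriously, the conditional-variance step you defer cannot be completed under the theorem's hypotheses: only $\E[\epsilon_i^2]=1$ is assumed, while controlling $\Var\bigl(\sum_m V_{m,N}^2\bigr)$ by Chebyshev (or invoking any discrete fourth-moment/Nualart--Peccati-type criterion, including the multivariate version you would need for joint Gaussianity) requires $\E[\epsilon_i^4]<\infty$, since for chaos order $k_j\ge 2$ the variables $V_{m,N}$ are themselves multilinear forms whose fourth moments involve fourth moments of the $\epsilon_i$'s. So the route as described stalls exactly where you predicted it would.

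The missing idea is the one the paper uses: truncate the filters, not the argument. Replacing $X_j(n)$ by the truncated process $X_j^{(m)}(n)$ of (\ref{eq:TruncPolyForm}), whose coefficients vanish beyond lag $m$, makes any linear combination $\sum_j c_j X_j^{(m)}(n)$ a stationary $m$-dependent sequence with finite variance, and the classical invariance principle for $m$-dependent sequences (\citep{billingsley1956invariance}, Theorem 5.2) gives the joint Gaussian limit with no moment assumptions beyond the second; the cross-covariances of the limit are again supplied by \cref{Pro:AbsSumCrossCov}. The truncation is then removed by the standard triangular argument: $\sigma_j^{(m)}\to\sigma_j$, $\sigma_{p,q}^{(m)}\to\sigma_{p,q}$, and $\Var[Y^{(m)}_{N,j}(t)-Y_{N,j}(t)]\to 0$ uniformly in $N$ as $m\to\infty$, which is precisely where the SRD bound $|a_i|\le ci^{d-1}$ enters quantitatively. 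This truncation is not optional bookkeeping; it is what makes the CLT accessible under second moments only, and without it (or an equivalent truncation of the $\epsilon_i$'s, which would be a comparable amount of work) neither your martingale route nor your fourth-moment alternative closes.
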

\begin{Rem}
$\sigma_{p,q}$ is well-defined by \cref{Pro:AbsSumCrossCov}.
\end{Rem}
\begin{Rem}
In view of (\ref{eq:CrossCov}) and (\ref{eq:SRDCov}), if all the components of the $\mathbf{Y}_N(t)$ have different order, then the limit components $B_j(t)$ uncorrelated and hence  independent. Otherwise, they are in general dependent and their covariance is given by (\ref{eq:SRDCov}).
\end{Rem}

\begin{Thm}\label{Thm:PureLRD}\textbf{Pure LRD Case.}
If all the components in $\mathbf{Y}_N$ defined in (\ref{eq:Target}) are LRD in the sense of (\ref{eq:LRD Def}) with $d=d_1,\ldots d_J$ respectively, then
\begin{align*}
\mathbf{Y}_N(t)\ConvFDD  \mathbf{Z}^{\mathbf{k}}_\mathbf{d}(t)=(Z_{d_1}^{(k_1)}(t),\ldots,Z_{d_J}^{(k_J)}(t)),
\end{align*}
where $Z_{d_j}^{(k_j)}(t)$ are  Hermite processes sharing the same random measure $W(.)$ in their Wiener-It\^o integral representations.  The normalization $A_j(N)$ in (\ref{eq:Y_j,N}) satisfies
$A_j(N)\sim c_j N^{1+(d_j-1/2)k_j} L(N)^{k_j/2}$ as $N\rightarrow \infty$  for some $c_j>0$. The processes $Z_{d_j}^{(k_j)}$, $j=1,\ldots,J$ are dependent.
\end{Thm}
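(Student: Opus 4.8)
The plan is to represent every partial sum as a discrete multiple stochastic integral in the common noise $\{\epsilon_i\}$, to reduce the joint statement to a single scalar limit by the Cram\'er--Wold device, and then to invoke a convergence theorem for discrete chaos that automatically produces Wiener--It\^o integrals against one and the same white noise $W$. Concretely, using the convention $a_{i,j}=0$ for $i\le 0$ and the change of variables $s_p=n-i_p$, each partial sum becomes a discrete chaos of order $k_j$,
\[
\sum_{n=1}^{[Nt]}X_j(n)=\sum_{s_1<\cdots<s_{k_j}}h_{N,j,t}(s_1,\ldots,s_{k_j})\,\epsilon_{s_1}\cdots\epsilon_{s_{k_j}},\qquad h_{N,j,t}(\mathbf{s})=\sum_{n=1}^{[Nt]}\prod_{p=1}^{k_j}a_{n-s_p,j}.
\]
Embedding $h_{N,j,t}$ as a suitably rescaled step function $f_{N,j,t}$ on $\mathbb{R}^{k_j}$ (with $\epsilon_{s}$ playing the role of $\sqrt{N}$ times an increment of $W$), identifying the marginal limit amounts to showing $f_{N,j,t}\to f^{(t)}_{k_j,d_j}$ in $L^2(\mathbb{R}^{k_j})$, where $f^{(t)}_{k_j,d_j}$ is the Hermite kernel \eqref{eq:HermKer}. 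This $L^2$-convergence is the engine behind \cref{Thm:UniLRD}: it follows from the regular variation of $\{a_{i,j}\}$ via Potter's bound and dominated convergence, in complete parallel with the computation \eqref{eq:sum a} in the proof of \cref{Pro:AsympACF}, and it simultaneously pins down $A_j(N)\sim c_jN^{1+(d_j-1/2)k_j}L(N)^{k_j/2}$.

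For the joint convergence I would invoke the Cram\'er--Wold device: it suffices to show that for arbitrary times $t_1,\ldots,t_M$ and coefficients $\lambda_{j,m}$ the scalar $S_N=\sum_{j=1}^{J}\sum_{m=1}^{M}\lambda_{j,m}Y_{j,N}(t_m)$ converges to $\sum_{j,m}\lambda_{j,m}Z^{(k_j)}_{d_j}(t_m)$. Grouping the summands by their chaos order gives $S_N=\sum_{k}Q_{k,N}$, where $Q_{k,N}$ is a single discrete chaos of order $k$ with kernel $g_{k,N}=\sum_{j:\,k_j=k}\sum_m\lambda_{j,m}f_{N,j,t_m}$, and by the first step $g_{k,N}\to g_k:=\sum_{j:\,k_j=k}\sum_m\lambda_{j,m}f^{(t_m)}_{k_j,d_j}$ in $L^2$. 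Discrete chaoses of distinct orders are orthogonal (an expectation of a product of two such monomials vanishes unless the index multisets pair up perfectly, forcing equal orders), so there are no cross-order contributions to the variance. Then $L^2$-convergence of the kernels $g_{k,N}$, together with the asymptotic negligibility of each individual $\epsilon_i$, forces $S_N\ConvD\sum_k I_k(g_k)=\sum_{j,m}\lambda_{j,m}I_{k_j}(f^{(t_m)}_{k_j,d_j})$, and --- this is the whole point --- all limiting integrals $I_k$ are taken against the \emph{same} white noise $W$. This delivers the shared-$W$ representation of $\mathbf{Z}^{\mathbf{k}}_{\mathbf{d}}$.

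I expect this last step to be the main obstacle. Since only $\E\epsilon_i^2=1$ is assumed, the method of moments is unavailable, so one must rely on the same $L^2$/contraction (diagram) machinery that underlies \cref{Thm:UniLRD} (see Chapter~4.8 of \citep{giraitis2009large}): one truncates each kernel to a finitely supported part on which a martingale/Lindeberg central limit argument applies, and controls the tail uniformly in $N$ in $L^2$. What makes the inhomogeneous sum $\sum_k Q_{k,N}$ work is precisely that the kernels of different orders converge separately while the diagonal and cross-order remainders are $L^2$-negligible. The negligibility condition --- that no single $\epsilon_i$ carries non-vanishing weight --- holds here because the mass of $h_{N,j,t}$ is spread over $\sim N$ indices, so the maximal influence is $o(1)$ relative to the $L^2$ norm.

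Finally, for dependence I would argue as follows, noting that pairwise dependence already rules out independence of the vector. If two components share the same order $k$, then each Hermite kernel $f^{(t)}_{k,d}=a_{k,d}\int_0^t\prod_p(s-x_p)_+^{d-1}ds$ is a nonnegative function, so $\Cov\big(Z^{(k)}_{d}(t),Z^{(k)}_{d'}(t)\big)=k!\,\langle f^{(t)}_{k,d},f^{(t)}_{k,d'}\rangle>0$ and the components are correlated, hence dependent. If the orders differ the components are uncorrelated (orthogonality of chaoses) but still dependent: the product formula for Wiener--It\^o integrals produces a non-vanishing mixed cumulant --- for orders $1$ and $2$, say, $\E\big[I_1(f)^2 I_2(g)\big]=2\langle f\otimes f,\,g\rangle>0$ for the nonnegative Hermite kernels at hand, whereas this quantity would equal $\E[I_1(f)^2]\,\E[I_2(g)]=0$ under independence. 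Analogous product-formula computations supply a nonzero joint cumulant for the remaining pairs of distinct orders, establishing that the limit components are dependent.
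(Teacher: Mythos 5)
Your proposal is correct and takes essentially the same route as the paper: the paper's proof simply cites Theorem 4.8.2 and Proposition 14.3.3 of \citep{giraitis2009large} (i.e., $L^2$-convergence of the rescaled discrete kernels to the Hermite kernels, plus joint convergence of discrete multiple integrals in the common noise to Wiener--It\^o integrals against the same $W$), which is precisely the machinery you reconstruct via the chaos representation, Cram\'er--Wold, and the kernel-approximation argument. Likewise, your dependence argument (positive covariance for equal orders, and a positive product-formula moment such as $\E\bigl[I_1(f)^2I_2(g)\bigr]=2\langle f\otimes f,g\rangle>0$ for distinct orders) is exactly the content of Proposition 3.1 of \citep{taqqu2012multivariate}, which the paper invokes without proof.
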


We  now consider the mixed SRD and LRD case.
\begin{Thm}\label{Thm:SRD&LRD}\textbf{Mixed SRD and LRD  Case.}
Break $\mathbf{Y}_N$ in (\ref{eq:Target}) into 3 parts:
$$\mathbf{Y}_N=(\mathbf{Y}_{N,S_1},\mathbf{Y}_{N,S_2},\mathbf{Y}_{N,L}),$$
where within  $\mathbf{Y}_{N,S_1}$ ($J_{S_1}-$dimensional) every component is SRD and has order $k_{j,S_1}=1$,  within  $\mathbf{Y}_{N,S_2}$ ($J_{S_2}-$dimensional) every component is SRD and has order $k_{j,S_2}\ge 2$, and within $\mathbf{Y}_{N,L}$ ($J_{L}-$dimensional) every component is LRD. Then
\begin{align}
\mathbf{Y}_N(t)=\left(\mathbf{Y}_{N,S_1}(t),\mathbf{Y}_{N,S_2}(t),\mathbf{Y}_{N,L}(t)\right)\ConvFDD  (\mathbf{W}(t), \mathbf{B}(t),\mathbf{Z}_{\mathbf{d}_L}^{\mathbf{k}_L}(t)),\label{eq:General Joint Conv}
\end{align}
where $\mathbf{B}(t):=\left(B_1(t),\ldots,B_{J_{S_2}}(t)\right)$ is the multivariate Gaussian process appearing in \cref{Thm:PureSRD},  $\mathbf{Z}_{\mathbf{d}_L}^{\mathbf{k}_L}(t)$ is the multivariate Hermite process appearing in \cref{Thm:PureLRD}, \begin{equation}\label{eq:B_1}
\mathbf{W}(t)=(W(t),\ldots,W(t)),
\end{equation}
where $W(t)$ is the Brownian motion integrator for defining $\mathbf{Z}_{\mathbf{d}_L}^{\mathbf{k}_L}(t)$  (see (\ref{eq:HermProc})), and $\mathbf{B}(t)$ is independent of $(\mathbf{W}(t),\mathbf{Z}_{\mathbf{d}_L}^{\mathbf{k}_L}(t))$.
\end{Thm}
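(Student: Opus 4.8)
The plan is to establish convergence of finite-dimensional distributions only (no tightness is claimed), organized around the single random object that links all three limiting blocks: the normalized partial sum of the innovations $N^{-1/2}\sum_{n=1}^{[Nt]}\epsilon_n$, which converges to a Brownian motion $W(t)$. This $W$ is precisely the integrator appearing in the Wiener-It\^o representation of the Hermite limits of \cref{Thm:UniLRD} and \cref{Thm:PureLRD}, so that every limiting component is a functional of one and the same $W$, and the problem becomes that of showing the three discrete blocks converge \emph{jointly} to the correct functionals of this common $W$. Since each $X_j(n)$ lives in a fixed discrete chaos of order $k_j$, I would argue at the level of the homogeneous multilinear sums $\sum_{n=1}^{[Nt]}X_j(n)$, using repeatedly the orthogonality of chaoses of different orders recorded in \cref{eq:CrossCov}.

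First I would treat the block $(\mathbf{Y}_{N,S_1},\mathbf{Y}_{N,L})$ jointly. Each $S_1$-component has order $k=1$ with absolutely summable coefficients, so by \cref{Thm:UniSRD} and \cref{Pro:AbsSumCrossCov} one has $A_j(N)\sim|\sum_i a_{i,j}|\,\sqrt N$ and $\frac{1}{A_j(N)}\sum_{n=1}^{[Nt]}X_j(n)=\mathrm{sgn}\big(\sum_i a_{i,j}\big)\,N^{-1/2}\sum_{n=1}^{[Nt]}\epsilon_n+o_{L^2}(1)$, which converges to $\pm W(t)$; the stated limit $W(t)$ corresponds to the convention $\sum_i a_{i,j}>0$, the general case merely flipping the sign of a standard Brownian motion. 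Simultaneously each LRD-component converges to $I_{k_j}(f_{k_j,d_j}^{(t)})$ by \cref{Thm:UniLRD}. To upgrade these marginal statements to joint convergence towards $(\mathbf{W}(t),\mathbf{Z}_{\mathbf{d}_L}^{\mathbf{k}_L}(t))$, I would fix each order $k$, show that the discrete kernels of the order-$k$ components converge in $L^2$ to the relevant continuous kernels ($f_{k,d}^{(t)}$ for an LRD-component, and $\mathbf{1}_{[0,t]}$ for an order-$1$ $S_1$-component), and then invoke a multivariate convergence theorem for vectors of homogeneous sums to pass, across orders, to the vector of Wiener-It\^o integrals $I_k$ taken against the common $W$.

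Next I would treat $\mathbf{Y}_{N,S_2}$, whose components are SRD of order $k\ge 2$ and are normalized at the central-limit scale $A_j(N)\sim\sigma_j\sqrt N$. By a de~Jong / fourth-moment central limit theorem for homogeneous sums, each such sum converges to a Gaussian, and jointly they converge to the Gaussian process $\mathbf{B}(t)$ of \cref{Thm:PureSRD} with covariance \cref{eq:SRDCov}. The decisive feature is that the relevant contraction norms of these order-$k$ sums tend to $0$: this is simultaneously what forces the Gaussian limit and what will produce the independence asserted in the theorem.

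The heart of the matter, and the step I expect to be the main obstacle, is showing that $\mathbf{B}(t)$ is asymptotically \emph{independent} of $(\mathbf{W}(t),\mathbf{Z}_{\mathbf{d}_L}^{\mathbf{k}_L}(t))$. Uncorrelatedness is the easy half. For an $S_2$-component $X_p$ against an $S_1$-component the cross-covariance vanishes identically by order mismatch (\cref{eq:CrossCov}); against an LRD-component $X_q$ of the same order $k$ one has, by the bound in the proof of \cref{Pro:AbsSumCrossCov}, $|\gamma_{p,q}(n)|\lesssim|n|^{k(d_p+d_q-1)}L^*(n)$, so $\Cov\big(\sum_{n=1}^{[Nt_1]}X_p,\sum_{n=1}^{[Nt_2]}X_q\big)=O\big(N^{\max(1,\,2+k(d_p+d_q-1))}\big)$, whereas $A_p(N)A_q(N)$ grows strictly faster because $d_p<\tfrac12(1-\tfrac1k)$ on the SRD side and $d_q>\tfrac12(1-\tfrac1k)$ on the LRD side; hence the normalized cross-covariance tends to $0$. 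Upgrading uncorrelatedness to independence is the subtle point, since for the non-Gaussian full limit $(\mathbf{W},\mathbf{B},\mathbf{Z}_{\mathbf{d}_L}^{\mathbf{k}_L})$ vanishing covariance does not by itself give independence. The resolution is exactly that $\mathbf{B}$ arises from a Gaussian (vanishing-contraction) limit, and in that regime asymptotic uncorrelatedness does force asymptotic independence for multiple integrals, by a Nourdin--Rosi\'nski-type criterion in its homogeneous-sum form; concretely I would check that all mixed contractions between the $S_2$-kernels and the kernels of the remaining components vanish in the limit and read off the product structure. A pervasive technical caveat is that, since the $\epsilon_i$ are not Gaussian, every convergence and independence statement for homogeneous sums must be justified through a universality argument (the influence of each coordinate is negligible here because the coefficients are square-summable and individually vanishing) rather than through Malliavin calculus directly.
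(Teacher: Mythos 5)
Your overall architecture matches the paper's in outline: everything is anchored to the common Brownian motion $W$ arising from $N^{-1/2}\sum_{n\le Nt}\epsilon_n$; the $S_1$ block collapses onto $\pm W$ by a Beveridge--Nelson type $L^2$ reduction (your sign caveat about $\sum_i a_{i,j}$ is a fair observation); the LRD block converges to Wiener--It\^o functionals of the same $W$ via $L^2$ convergence of the discrete kernels; and you correctly identify that the asymptotic independence of the $S_2$ block is the crux, with your uncorrelatedness computations (order mismatch via (\ref{eq:CrossCov}), and the exponent count $k(d_p+d_q-1)$ against the normalizations in the same-order SRD/LRD case) being correct. The gap is in how you resolve that crux. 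You invoke a de~Jong / fourth-moment CLT for the $S_2$ block and a Nourdin--Rosi\'nski-type vanishing-contraction criterion, transferred by universality, for the independence. All of this machinery presupposes moments of $\epsilon_i$ beyond the theorem's hypotheses: \cref{Thm:SRD&LRD} assumes only i.i.d.\ innovations with mean $0$ and variance $1$, and without $E(\epsilon_i^4)<\infty$ the fourth moments $E[Q_k(h)^4]$ of the homogeneous sums need not exist, so neither de~Jong's criterion nor any contraction/fourth-moment condition can even be formulated. (In the paper, higher-moment conditions appear only in \cref{Thm:WeakConv}, for tightness; the f.d.d.\ statement needs none.) Moreover, even granting fourth moments, the precise result you need --- joint convergence of a vector of homogeneous sums in non-Gaussian innovations in which one block has a Gaussian limit and another a non-Gaussian Hermite-type limit, with the product structure read off from vanishing mixed contractions --- is not off-the-shelf: universality theorems cover Gaussian limits of homogeneous sums, and extending them to such mixed limits is essentially the content of the theorem being proved.

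The paper closes this gap with a more elementary device requiring only second moments. It truncates the SRD processes to $m$-dependent ones (\ref{eq:TruncPolyForm}); then every linear combination of the truncated $S_1$, $S_2$ components and the noise increments $W_N(\Delta_i)=N^{-1/2}\sum_{n/N\in\Delta_i}\epsilon_n$ is a stationary $m$-dependent sequence, so the classical $m$-dependent invariance principle gives joint \emph{Gaussianity} of the entire block (\cref{Lem:SRD and W} and \cref{Rem:Brownian measure}). Within this single Gaussian limit vector, the $S_2$ limit $\mathbf{B}^{(m)}$ is uncorrelated with the first-order-chaos block $(\mathbf{W}(t),W(\Delta_i))$, hence independent of it. The LRD limits are then manufactured out of $W$ by approximation: the discrete multiple integrals $\mathbf{Q}_N$ are approximated in $L^2$ by polynomials in the $W_N(\Delta_i)$, so the continuous mapping theorem plus a triangular argument (first $\epsilon\rightarrow 0$, then $m\rightarrow\infty$) yields the joint limit (\cref{Lem:2}, \cref{Lem:final}), and the independence of $\mathbf{B}$ from $(\mathbf{W},\mathbf{Z}_{\mathbf{d}_L}^{\mathbf{k}_L})$ is inherited for free because $\mathbf{Z}_{\mathbf{d}_L}^{\mathbf{k}_L}$ is a measurable functional of $W$; no contraction computation is needed anywhere. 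To salvage your route you would have to add $E(\epsilon_i^4)<\infty$ as a hypothesis (weakening the theorem) and prove the mixed-limit transfer result yourself; alternatively, adopt the truncation-plus-continuous-mapping scheme, which is exactly what makes the independence assertion come out automatically.
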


\begin{Rem}
To understand heuristically why $\mathbf{B}(t)$ and  $(\mathbf{W}(t),\mathbf{Z}_{\mathbf{d}_L}^{\mathbf{k}_L}(t))$ are independent, note that $\mathbf{Y}_{N,S_2}(t)$ belongs to  chaos of order $\ge 2$, and is thus uncorrelated with $\mathbf{Y}_{N,S_1}(t)$ which belongs to first-order chaos, and also uncorrelated with the random noise $\{\epsilon_i\}$ which also belongs to the first-order chaos, and which after summing becomes asymptotically the Brownian measure $W(.)$ defining $\mathbf{Z}_{\mathbf{d}_L}^{\mathbf{k}_L}(t)$.
\end{Rem}

\begin{Rem}
The independence between $\mathbf{B}(t)$ and $\mathbf{Z}_{\mathbf{d}_L}^{\mathbf{k}_L}(t)$ for $k_{j,L}\ge 3$ (the order in LRD component) in the framework of \citep{taqqu2012multivariate}, is only a conjecture.
\end{Rem}

The convergence results in the above theorems are stated in terms of convergence in finite-dimensional distributions, but one can show that in some cases they extend to weak convergence in $D[0,1]^J$ (J-dimensional product space where $D[0,1]$ is the space of C\`adl\`ag functions on $[0,1]$ with uniform metric).

\begin{Thm}\label{Thm:WeakConv}
\textbf{Weak convergence in $D[0,1]^J$.}
\begin{enumerate}
\item \cref{Thm:PureLRD} holds with `` $\ConvFDD$ '' replaced by  weak convergence in $D[0,1]^J$;

\item If the SRD component in \cref{Thm:PureSRD} (or \cref{Thm:SRD&LRD}) satisfies either of the following conditions:
\begin{enumerate}[a.]
\item\label{case:m-dependent} There exists $m\ge 0$, such that the coefficients $a_{i}$ in (\ref{eq:Def Poly Process}) are zero for all $i>m$;

\item\label{case:gaussian} $\{\epsilon_i\}$  are i.i.d.\ Gaussian.

\item\label{case:linear} The order $k=1$ and  $E(|\epsilon_i|^{2+\delta})<\infty$ for some $\delta>0$;

\item\label{case:multilinear} The order $k\ge 2$, $\sum_{i=1}^\infty |a_i|<\infty$ and $E(|\epsilon_i|^{5})<\infty$;

\end{enumerate}
then \cref{Thm:PureSRD} (or \cref{Thm:SRD&LRD}) holds with `` $\ConvFDD$ '' replaced by  weak convergence in $D[0,1]^J$.
\end{enumerate}
\end{Thm}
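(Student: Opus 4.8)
The plan is to combine the finite-dimensional convergence already established in \cref{Thm:PureLRD}, \cref{Thm:PureSRD} and \cref{Thm:SRD&LRD} with a tightness argument. Since each limit process (Brownian motion, Hermite process) has a continuous version, it suffices to prove tightness of $\{\mathbf{Y}_N\}$ in the Skorokhod space $(D[0,1],J_1)^J$: $J_1$-convergence to a continuous limit automatically upgrades to convergence in the uniform metric. Moreover, tightness in the finite product $D[0,1]^J$ is equivalent to tightness of each scalar component $\{Y_{j,N}\}_N$, so I would treat the components one at a time. For a single component I would verify the moment form of Billingsley's tightness criterion: for $0\le r\le s\le t\le 1$ find a continuous nondecreasing $G$ and exponents with
\[
\E\left[|Y_{j,N}(s)-Y_{j,N}(r)|^{\gamma}|Y_{j,N}(t)-Y_{j,N}(s)|^{\gamma}\right]\le C\left(G(t)-G(r)\right)^{1+\delta},\quad \delta>0,
\]
or, equivalently for partial-sum processes, a single-increment bound $\E|Y_{j,N}(t)-Y_{j,N}(s)|^{\gamma}\le C(t-s)^{1+\delta}$ fed into a maximal inequality for sums.

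For an LRD component the increment $Y_{j,N}(t)-Y_{j,N}(s)=A_j(N)^{-1}\sum_{n=[Ns]+1}^{[Nt]}X_j(n)$ lies in the discrete chaos of order $k_j$, and I would estimate its variance directly. Using the regular variation of $\gamma_j$ from \cref{Pro:AsympACF} (exponent $2d_{X_j}-1$) and the normalization $A_j(N)\sim c_jN^{1+(d_j-1/2)k_j}L(N)^{k_j/2}$ from \cref{Thm:UniLRD}, Potter's bound gives $\E|Y_{j,N}(t)-Y_{j,N}(s)|^2\le C(t-s)^{1+2d_{X_j}}$ on the lattice scale $t-s\ge 1/N$, with exponent $1+2d_{X_j}>1$ because LRD means $d_{X_j}>0$. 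Since the superlinear power already appears at the level of second moments, a maximal inequality of M\'oricz type (applicable because the block-variance bound is of the form $(\sum_{n}1)^{1+2d_{X_j}}$ with exponent exceeding $1$) converts this into control of the maximal increment and yields tightness with no assumption on $\{\epsilon_i\}$ beyond finite variance. This proves item 1 and handles the LRD block of the mixed case.

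For an SRD component the summability from \cref{Pro:AbsSumCrossCov} gives $A_j(N)\sim\sigma_j\sqrt N$, so the increment variance scales like $(t-s)$ with exponent exactly $1$: second moments sit at the critical Brownian threshold and no longer suffice, and this is precisely where the extra hypotheses enter. In case \ref{case:m-dependent} the coefficients have finite support, so $\{X_j(n)\}$ is a stationary $m$-dependent sequence and tightness is the classical functional central limit theorem for $m$-dependent sequences. In case \ref{case:gaussian} the $\epsilon_i$ are Gaussian, hence $X_j(n)$ lives in a Gaussian Wiener chaos and Nelson's hypercontractivity gives the moment equivalence $\E|Y_{j,N}(t)-Y_{j,N}(s)|^{4}\le C(\E|Y_{j,N}(t)-Y_{j,N}(s)|^{2})^{2}\le C(t-s)^2$, whose exponent $2>1$ closes the criterion via Cauchy--Schwarz. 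In case \ref{case:linear} the order is $1$ and $\E|\epsilon|^{2+\delta}<\infty$, which is the classical short-memory linear FCLT, the $(2+\delta)$-moment producing the required superlinear increment bound.

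The remaining case \ref{case:multilinear}, $k\ge2$ with $\sum_i|a_i|<\infty$ and $\E|\epsilon|^{5}<\infty$, is where the main work lies, since hypercontractivity is unavailable for non-Gaussian higher-order chaos. Here I would establish $\E|Y_{j,N}(t)-Y_{j,N}(s)|^{4}\le C(t-s)^2$ by expanding the fourth moment of the block sum $\sum_{n}X_j(n)$ through the product formula for multilinear polynomial forms. The expectation decomposes over partitions of the $4k_j$ noise indices and is nonzero only when every noise index is repeated, which forces enough coincidences among the four time points to produce the factor $([Nt]-[Ns])^2$; the absolute summability $\sum_i|a_i|<\infty$ controls the spatial index sums, while the moment bound $\E|\epsilon|^{5}<\infty$ guarantees that the mixed $\epsilon$-moments arising in the expansion are finite and that the cross terms are summable. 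Carrying out this combinatorial bookkeeping --- identifying the admissible partitions, showing the off-diagonal contributions are of the correct order, and extracting the clean $(t-s)^2$ bound uniformly in $N$ --- is the principal obstacle; once it is in place, tightness and hence weak convergence in $D[0,1]^J$ follow as in the other cases.
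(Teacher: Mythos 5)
Your overall architecture --- finite-dimensional convergence from \cref{Thm:PureSRD}, \cref{Thm:PureLRD} and \cref{Thm:SRD&LRD}, reduction to componentwise tightness, the $m$-dependent FCLT for case \ref{case:m-dependent}, Gaussian hypercontractivity for case \ref{case:gaussian}, and the classical linear-process criterion for case \ref{case:linear} --- coincides with the paper's proof, and your direct second-moment/M\'oricz-type treatment of the LRD block is a legitimate reconstruction of the result the paper simply cites (Theorem 4.8.2 of \citep{giraitis2009large}). The problem is case \ref{case:multilinear}, which is exactly where the paper has to do real work, and which your proposal leaves unproven: you assert that the bound $\E|Y_N(t)-Y_N(s)|^4\le C(t-s)^2$ would follow from a partition expansion of the fourth moment, and you yourself flag the required bookkeeping as ``the principal obstacle.'' That bookkeeping \emph{is} the content of the case; deferring it means case \ref{case:multilinear} is not established. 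Moreover, the sketch is not aimed at quite the right target: the fourth moment of a block sum splits into pairing (covariance-product) terms, which $\sum_n|\gamma(n)|<\infty$ already controls at order $M^2$, plus a joint fourth-cumulant term, and the actual task is to prove summability of those fourth cumulants; ``coincidences among the four time points producing the factor $([Nt]-[Ns])^2$'' does not identify or carry out this step.

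Second, your premise that hypercontractivity is unavailable for non-Gaussian higher-order chaos is false, and the paper's proof rests on precisely the opposite fact. Inequality (11.4.1) of \citep{nourdin2012normal} gives $\E\left(Q_k(h)^4\right)\le \left(3+2\E(\epsilon_i^4)\right)^{2k}\E\left(Q_k(h)^2\right)^2$ for polynomial forms in general i.i.d.\ variables with finite fourth moment (for finitely supported kernels); the paper's \cref{Lem:HyperContract} extends this to kernels with $\sum|h|<\infty$ by truncating, applying the finite-support inequality, and passing to the limit via uniform integrability --- and that limit passage, not finiteness of moments in a diagram expansion, is where $\E(|\epsilon_i|^5)<\infty$ enters. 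Indeed, in a fourth-moment expansion each noise variable occurs at most once per factor, so only fourth moments of $\epsilon_i$ ever appear; your stated reason for the fifth-moment hypothesis is misdiagnosed. Once the discrete hypercontractivity inequality is in hand, case \ref{case:multilinear} reduces verbatim to your case-\ref{case:gaussian} argument. If you insist on the direct route, you must actually execute the cumulant/diagram estimate; as written, the proposal has a genuine gap at the one point the theorem's hypotheses were designed for.
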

Note that tightness   in the SRD case results from an interplay between the dependence structure and the finiteness of the moments.
\section{Proofs for the multivariate convergence results}\label{sec:proof}

\subsection{Pure SRD case}
\begin{proof}[Proof of \cref{Thm:PureSRD}]
Following the idea of \citep{giraitis2009large} p.108., we define the truncated multilinear polynomial-form processes:
\begin{align}\label{eq:TruncPolyForm}
X_j^{(m)}(n)=\sum_{1\le i_1<\ldots<i_{k_j}\le m}a_{i_1,j}\ldots a_{i_{k_j},j}~\epsilon_{n-i_1}\ldots\epsilon_{n-i_{k_j}},\quad j=1,\ldots,J,
\end{align}
where  $m>max_j\{k_j\}$.  Note that $X_j^{(m)}(n)$ is  $m$-dependent.
Set $(\sigma^{(m)}_{j})^2=\sum_n\mathrm{Cov}\left(X_j^{(m)}(n),X_j^{(m)}(0)\right)$ (assume  $m$ is large enough so that $\sigma^{(m)}_{j}>0$), and $\sigma_{p,q}^{(m)}=\sum_n \Cov\left(X_p^{(m)}(n),X_q^{(m)}(0)\right)$ which is well-defined due to \cref{Pro:AbsSumCrossCov}.

Set
\begin{align*}
Y_{N,j}(t):=\frac{1}{\sigma_{j}\sqrt{N}}\sum_{n=1}^{[Nt]}X_j(n),\quad Y_{N,j}^{(m)}(t):=\frac{1}{\sigma_j^{(m)}\sqrt{N}}\sum_{n=1}^{[Nt]}X_j^{(m)}(n).
\end{align*}
\cref{Thm:PureSRD} follows if one shows that as $N\rightarrow\infty$,
\begin{align}\label{eq:Truncated Conv}
\mathbf{Y}^{(m)}_N
(t)=:\left(Y^{(m)}_{N,1}(t),\ldots,Y^{(m)}_{N,J}(t)\right)
\ConvFDD \mathbf{B}^{(m)}(t):=\left(B^{(m)}_1(t),\ldots,B^{(m)}_J(t)\right)
\end{align}
where $B^{(m)}_j(t)$'s are Brownian motions with cross-covariance structure:
\begin{align}\label{eq:TruncCrossCov}
\Cov(B^{(m)}_p(t_1),B^{(m)}_q(t_2))=(t_1\wedge t_2)\frac{\sigma^{(m)}_{p,q}}{\sigma_{p}^{(m)}\sigma_{q}^{(m)}},
\quad p,q=1,\ldots,J,
\end{align}
and as $m\rightarrow\infty$,
\begin{align}\label{eq:Sigma Conv}
\sigma_{j}^{(m)}\rightarrow\sigma_{j},\quad   \sigma_{p,q}^{(m)}\rightarrow\sigma_{p,q}
\end{align}
as well as for any $j=1,\ldots, J$ and $t\ge 0$, as $m\rightarrow\infty $,
\begin{align}\label{eq:Trunc Ori}
\mathrm{Var}\left[Y^{(m)}_{N,j}(t)-Y_{N,j}(t)\right] \rightarrow 0
\end{align}
uniformly in $N$.
Indeed, combining (\ref{eq:Truncated Conv}), (\ref{eq:Sigma Conv}) and (\ref{eq:Trunc Ori}), one obtains the desired convergence:
\begin{align*}
\mathbf{Y}_N
(t)=\left(Y_{N,1}(t),\ldots,Y_{N,J}(t)\right)
\ConvFDD \mathbf{B}(t):=\left(B_1(t),\ldots,B_J(t)\right)
\end{align*}

Relations (\ref{eq:Sigma Conv}) and (\ref{eq:Trunc Ori}) can be shown using the same type of arguments in  \citep{giraitis2009large} p.108.
We thus only need to show (\ref{eq:Truncated Conv}) and (\ref{eq:TruncCrossCov}).
By the Cr\'amer-Wold device, it suffices to show that for any $(c_1,\ldots,c_J)\in \mathbb{R}^J$,
\begin{align}\label{eq:CramerWold}
\sum_{j}c_jY_{N,j}^{(m)}(t)=
\frac{1}{\sigma_j^{(m)}\sqrt{N}}\sum_{n=1}^{[Nt]} (\sum_{j}c_jX_j^{(m)}(n))
\ConvFDD \sum_j c_jB^{(m)}_{j}(t)=:G(t)
\end{align}
where  $G(t)$ is a non-standardized Brownian motion. This follows from the fact that the sequence $\{\sum_{j}c_jX_j^{(m)}(n)\}_n$ is m-dependent and  is thus subject to functional central limit theorem (\citep{billingsley1956invariance} Theorem 5.2), which includes convergence in finite-dimensional distributions. The asymptotic cross-covariance  structure (\ref{eq:TruncCrossCov}) follows from \cref{Pro:AbsSumCrossCov}.
\end{proof}

\subsection{Pure LRD case}
\begin{proof}[Proof of \cref{Thm:PureLRD}]
The joint convergence is proved by combining  Theorem 4.8.2.  and Proposition 14.3.3   of \citep{giraitis2009large},  and the arguments leading to them.

The dependence between the limit Hermite processes with different orders is shown  in Proposition 3.1 in \citep{taqqu2012multivariate}.
\end{proof}

\subsection{Mixed SRD and LRD case}

We prove \cref{Thm:SRD&LRD} through a number of lemmas, one lemma implying the next.

\begin{Lem}\label{Lem:SRD and W}
Follow the notations and assumptions in \cref{Thm:SRD&LRD}. Let $X_{j,S_i}^{(m)}(n)$ be the m-truncated multilinear polynomial-form process (see (\ref{eq:TruncPolyForm})) corresponding to the components of $\mathbf{Y}_{N,S_i}$ ($i=1,2$) in \cref{Thm:SRD&LRD}, where the orders satisfy $k_{j,S_1}=1$ and $k_{j,S_2}\ge 2$. Let
\[
Y_{N,j,i}^{(m)}(t):=\frac{1}{\sigma_{j,S_i}^{(m)}\sqrt{N}}\sum_{n=1}^{[Nt]}X_{j,S_i}^{(m)}(n),\quad j=1,\ldots,J_i,~i=1,2,
\]
where (assuming that $m$ is large enough) $0<(\sigma^{(m)}_{j,S_i})^2:=\sum_{n}\Cov(X_{j,S_i}^{(m)}(n),X^{(m)}_{j,S_i}(0))<\infty,~i=1,2.$
Let $W_N(t):=N^{-1/2}\sum_{n=1}^{[Nt]}\epsilon_n$, and $\mathbf{Y}_{N,S_i}^{(m)}(t)=(Y_{N,1,i}^{(m)}(t),\ldots,Y_{N,J_{S_i},i}^{(m)}(t))$, $i=1,2$.
Then
\begin{align}\label{eq:S_1 S_2 W}
\Big(\mathbf{Y}_{N,S_1}^{(m)}(t), \mathbf{Y}_{N,S_2}^{(m)}(t),W_N(t)\Big) \ConvFDD \Big(\mathbf{W}(t), \mathbf{B}^{(m)}(t),W(t)\Big),
\end{align}
where $W(t)$ is a standard Brownian motion,  $\mathbf{W}(t)=(W(t),\ldots,W(t))$ ($J_{S_2}$-dimensional),
$\mathbf{B}^{(m)}(t)$ is as given in (\ref{eq:Truncated Conv}), namely, its components are standard Brownian motions with cross-covariance (\ref{eq:TruncCrossCov}), and $\mathbf{B}^{(m)}(t)$ is independent of $(\mathbf{W}(t),W(t))$.
\end{Lem}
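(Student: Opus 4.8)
The plan is to reduce the joint convergence in \eqref{eq:S_1 S_2 W} to a statement about a single $m$-dependent sequence, then apply a multivariate functional CLT. First I would fix $m$ large and observe that all three blocks are built from the same i.i.d.\ sequence $\{\epsilon_i\}$, and that the truncation makes each $X_{j,S_i}^{(m)}(n)$ a function of only finitely many $\epsilon$'s; consequently the joint process $\big(X_{1,S_1}^{(m)}(n),\ldots,X_{J_{S_2},S_2}^{(m)}(n),\epsilon_n\big)$ is a vector-valued stationary $m$-dependent sequence. By the Cr\'amer--Wold device, \eqref{eq:S_1 S_2 W} will follow once I verify, for every real vector of coefficients, that the corresponding scalar linear combination of partial sums converges in finite-dimensional distributions to a (possibly degenerate) Brownian motion. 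Since a linear combination of $m$-dependent sequences is again $m$-dependent, I can invoke the functional CLT for $m$-dependent sequences (\citep{billingsley1956invariance}, Theorem~5.2) exactly as in the proof of \cref{Thm:PureSRD}; this simultaneously yields the finite-dimensional convergence and identifies the limit as Gaussian with stationary independent increments.

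The substance therefore lies entirely in computing the limiting covariance structure, which must reproduce the right-hand side of \eqref{eq:S_1 S_2 W}. There are three types of cross-covariances to evaluate asymptotically. For the $S_2$--$S_2$ block I would reuse \cref{Pro:AbsSumCrossCov} and the normalization to recover the cross-covariance \eqref{eq:TruncCrossCov}, giving $\mathbf{B}^{(m)}(t)$. For the $W_N$--$W_N$ term the computation is immediate: $\Var[W_N(t)]\to t$ because $\{\epsilon_n\}$ are i.i.d.\ with unit variance, and similarly each $S_1$ component, being a truncated first-order (i.e.\ linear) filter $\sum_i a_{i} \epsilon_{n-i}$, has partial sums whose covariance with $W_N$ converges, after normalization by $\sigma_{j,S_1}^{(m)}$, to $(t_1\wedge t_2)$; this forces every $S_1$ component and $W_N$ to share the \emph{same} limiting Brownian motion $W(t)$, which is precisely why $\mathbf{W}(t)=(W(t),\ldots,W(t))$ and the degeneracy appears. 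The relevant fact is that the normalized partial sum of $\sum_i a_i \epsilon_{n-i}$ is asymptotically $\big(\sum_i a_i\big)N^{-1/2}\sum_n \epsilon_n$ up to a negligible remainder, so $S_1$ and $W_N$ are asymptotically perfectly correlated; I would make this precise through the Fubini-type identity already used in establishing \eqref{eq:gamma_1,2 k=1}.

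The key step, and the one I expect to be the main obstacle, is the vanishing of the cross-covariance between the $S_2$ block and the pair $(\mathbf{W}_N, \mathbf{Y}_{N,S_1})$, which is what delivers the asserted independence of $\mathbf{B}^{(m)}(t)$ from $(\mathbf{W}(t),W(t))$. Here I would exploit the chaos structure: each $X_{j,S_2}^{(m)}(n)$ lies in a discrete chaos of order $k_{j,S_2}\ge 2$, whereas both $\epsilon_n$ and the $S_1$ components lie in the first-order chaos, so their cross-covariances vanish \emph{exactly} for every $n$ by orthogonality of distinct chaoses, as reflected in the case $p\neq q$ of \eqref{eq:CrossCov}. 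Thus the finite-$N$ cross-covariance is identically zero, the limiting Gaussian vector splits into independent blocks, and since a jointly Gaussian vector with zero cross-covariance has independent subvectors, the independence in \eqref{eq:S_1 S_2 W} follows. The only care needed is to confirm that this exact orthogonality survives truncation (it does, since truncation preserves the chaos order) and to handle the degenerate-covariance case in the functional CLT, where the limit of a linear combination may be a Brownian motion with zero variance; the $m$-dependent CLT accommodates this without modification.
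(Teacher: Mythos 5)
Your proposal is correct and follows essentially the same route as the paper's proof: Cram\'er--Wold reduction to a scalar $m$-dependent sequence, Billingsley's functional CLT for $m$-dependent sequences, and then identification of the limit covariance via \cref{Pro:AbsSumCrossCov} (using (\ref{eq:LimitCrossCov}) and (\ref{eq:gamma_1,2 k=1}) for the perfect correlation of the $S_1$ block with $W_N$, and exact chaos orthogonality for the independence of the $S_2$ block). No gaps to report.
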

\begin{proof}
Fix any $\mathbf{w}=(a_1,\ldots,a_{J_{S_1}},b_1,\ldots,b_{J_{S_2}},c)\in \mathbb{R}^{J_{S_1}+J_{S_2}+1}$. By Cram\'er-Wold, we want to show that
\begin{align*}
R_N(t;\mathbf{w}):=&\sum_{j}a_jY_{N,j,1}^{(m)}(t)+\sum_{j}b_jY_{N,j,2}^{(m)}(t)+ cW_N(t)\\\ConvFDD
&\sum_{j}a_jW(t)+\sum_{j}b_jB^{(m)}_j(t)+ c W(t)=:G(t),
\end{align*}
where $G(t)$ is a non-standardized Brownian motion whose marginal variance  is the limit of the marginal variance of $R_N(t;\mathbf{w})$.
Note that one can write
\begin{align*}
R_N(t;\mathbf{w})=\frac{1}{\sqrt{N}}\sum_{n=1}^{[Nt]}U_{\mathbf{w}}^{(m)}(t),
\end{align*}
where
\begin{align*}
U_{\mathbf{w}}^{(m)}(n)=\sum_{j=1}^{J_{S_1}}\frac{a_j}{\sigma_{j,S_1}^{(m)}}X_{j,S_1}^{(m)}(n)+\sum_{j=1}^{J_{S_2}}\frac{b_j}{\sigma_{j,S_2}^{(m)}}X_{j,S_2}^{(m)}(n)+c e_n^{(m)}
\end{align*}
with \[e_n^{(m)}=\sum_{i=(m-1)n+1}^{mn}\epsilon_i.\]
Since $\{U_{\mathbf{w}}^{(m)}(n)\}_n$ is m-dependent, the classical functional central limit theorem applies (\citep{billingsley1956invariance}), yielding  in the limit a Brownian motion $G(t)$ for $R_N(t;\mathbf{w})$. Now that the joint normality is shown, we only need to identify the asymptotic covariance structure as $N\rightarrow\infty$ of the left-hand side of (\ref{eq:S_1 S_2 W}) to the covariance structure of the right-hand side of (\ref{eq:S_1 S_2 W}).

The independence between $\mathbf{B}^{(m)}(t)$  and $(\mathbf{W}(t),W(t))$  follows from the uncorrelatedness between $\mathbf{Y}_{N,S_2}^{(m)}(t)$ (involving chaos of order $\ge 2$) and $(\mathbf{Y}_{N,S_1}^{(m)}(t),W_N(t))$ (involving chaos of order 1 only). The asymptotic covariance structure within $\mathbf{Y}_{N,S_2}^{(m)}(t)$ is  given in (\ref{eq:TruncCrossCov}) (apply \cref{Thm:PureSRD} to $\mathbf{Y}_{N,S_2}^{(m)}$).
Hence we are left to show that the asymptotic covariance structure of $(\mathbf{Y}_{N,S_1}^{(m)}(t),W_N(t))$ is that of $(\mathbf{W}(t),W(t))$.
Note that in $(\mathbf{Y}_{N,S_1}^{(m)}(t),W_N(t))$, both $\{X^{(m)}_{j,S_1}(n)\}$ and $\{\epsilon_n\}$ are SRD linear processes. So applying (\ref{eq:LimitCrossCov}) and (\ref{eq:gamma_1,2 k=1}) in \cref{Pro:AbsSumCrossCov} with $\sigma_1=\sigma_2=1$, the desired asymptotic covariance structure is obtained.
\end{proof}

\begin{Rem}\label{Rem:Brownian measure}
\cref{Lem:SRD and W} can be rephrased as follows: we define an \emph{empirical random measure} on a finite interval $\Delta$ as: $W_N(\Delta):=\frac{1}{\sqrt{N}}\sum_{n/N\in \Delta} \epsilon_n$. Then the joint convergence in \cref{Lem:SRD and W} still holds with $W(t)$ replaced by $(W_N(\Delta_1),\ldots,W_N(\Delta_I))$ where $\Delta_i, i=1,\ldots,I$ are disjoint intervals, and $W(t)$ in the limit replaced by $(W(\Delta_1),\ldots,W(\Delta_I))$ where $W(.)$ is the Brownian random measure. Observe  that while (\ref{eq:S_1 S_2 W}) involves convergence in distribution, the limit components $\mathbf{W}(t)$ and $W(t)$ both involve the same Brownian motion $W(t)$.
\end{Rem}

Now we adopt some notations from \citep{giraitis2009large} Chapter 14.3. Let
$S_M(\mathbb{R}^{k})$ be the class of simple functions defined on $\mathbb{R}^k$ supported on a finite number of $1/M$-cubes and vanishing on the diagonals. Suppose that $h$ is a function defined on $\mathbb{Z}^k$ which vanishes on diagonals. Let the polynomial form (or discrete multiple integral) with respect to $h$ be
\begin{equation}\label{eq:Q_k(h)}
Q_k(h)=\sum_{i_1,\ldots,i_k\in \mathbb{Z}}h(i_1,\ldots,i_k)\epsilon_{i_1}\ldots\epsilon_{i_k},
\end{equation}
where $\sum_{i_1,\ldots,i_k}h(i_1,\ldots,i_k)^2<\infty$.
The following lemma plays a key role in the proof of \cref{Thm:SRD&LRD}.
\begin{Lem}\label{Lem:2}
Replace $(\mathbf{Y}^{(m)}_{N,S_1}(t),\mathbf{Y}^{(m)}_{N,S_2}(t),\mathbf{W}_N(t))$ in \cref{Lem:SRD and W} by $(\mathbf{Y}^{(m)}_{N,S_1}(t),\mathbf{Y}^{(m)}_{N,S_2}(t),\mathbf{Q}_N)$, where $\mathbf{Q}_N=\left(Q_{k_1}(h_{1,N}),\ldots,Q_{k_{J_L}}(h_{J_L,N}))\right)$ and
each $Q_{k_p}(h_{p,N})$, $p=1,\ldots,J_L$, is a polynomial-form defined in (\ref{eq:Q_k(h)}) with the same $\{\epsilon_i\}$ as those defining $\mathbf{Y}_{N,S_1}^{(m)}(t)$ and $\mathbf{Y}_{N,S_2}^{(m)}(t)$. Assume that the ``normalized continuous extension'' of  $h_{p,N}$, that is,
\begin{equation}\label{eq:h LRD}
\tilde{h}_{p,N}(x_1,\ldots,x_{k_p}):=N^{k_p/2}h_{p,N}([Nx_1],\ldots,[Nx_{k_p}])
\end{equation}
satisfy that there exists $f_p\in L^2(\mathbb{R}^{k_p})$ for each $p=1,\ldots,J_L$,
\begin{equation}\label{eq:kernelConv}
\lim_{N\rightarrow\infty}\|\tilde{h}_{p,N}-f_p\|_{L^2(\mathbb{R}^{k_p})}\rightarrow 0.
\end{equation}
Now define the limit vector $\big(\mathbf{W}(t), \mathbf{B}^{(m)}(t),\mathbf{I}\big)$ as follows: $\mathbf{W}(t)$ and  $\mathbf{B}^{(m)}(t)$ are as in (\ref{eq:S_1 S_2 W}), independent, and $\mathbf{I}=\left(I_{k_p}(f_p)\right)_{p=1,\ldots,J_L}$, where each Wiener-It\^o integral $I_{k_p}(.)$   has Brownian motion integrator $W(.)$ the same as the Brownian motion $W(t)$ defining $\mathbf{W}(t)$. Then as $N\rightarrow\infty$,
\begin{align}\label{eq:S_1 S_2 I}
\Big(\mathbf{Y}_{N,S_1}^{(m)}(t), \mathbf{Y}_{N,S_2}^{(m)}(t),\mathbf{Q}_N\Big) \ConvFDD \Big(\mathbf{W}(t), \mathbf{B}^{(m)}(t),\mathbf{I}\Big).
\end{align}
\end{Lem}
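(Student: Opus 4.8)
The plan is to prove \cref{Lem:2} in two stages. First I would establish the claim when each limiting kernel $f_p$ is a \emph{simple} function in $S_M(\mathbb{R}^{k_p})$, in which case the polynomial forms reduce to polynomials in the empirical-measure increments of \cref{Rem:Brownian measure}. Then I would pass to a general $f_p\in L^2(\mathbb{R}^{k_p})$ by an $L^2$ approximation argument built on the (near-)isometry of the discrete multiple integrals $Q_k$. Throughout, the coordinates $\mathbf{Y}_{N,S_1}^{(m)}(t)$ and $\mathbf{Y}_{N,S_2}^{(m)}(t)$ are left untouched, so the work is entirely in identifying the joint limit of the $\mathbf{Q}_N$ coordinate together with these, and in checking that the independence of $\mathbf{B}^{(m)}(t)$ from the first-chaos objects survives. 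The essential structural input is that the same empirical measure $W_N$ that drives $\mathbf{Y}_{N,S_1}^{(m)}$ (via \cref{Lem:SRD and W}) also builds $\mathbf{Q}_N$, which is why the limit uses a single Brownian motion $W$ in both $\mathbf{W}(t)$ and $\mathbf{I}$.

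\emph{Stage 1 (simple kernels).} Fix $M$ and suppose each $f_p=f_p^{(M)}\in S_M(\mathbb{R}^{k_p})$ is supported on finitely many off-diagonal $1/M$-cubes and vanishes on the diagonals. Letting $\{\Delta_i\}$ be the underlying disjoint $1/M$-intervals, I would take $h_{p,N}$ to be the natural discretization $h_{p,N}(j_1,\ldots,j_{k_p})=N^{-k_p/2}f_p^{(M)}(j_1/N,\ldots,j_{k_p}/N)$, so that $\tilde h_{p,N}\to f_p^{(M)}$ in $L^2$. Since $f_p^{(M)}$ is constant on each cube and the cubes are off-diagonal, $Q_{k_p}(h_{p,N})$ becomes, up to asymptotically negligible boundary effects, a finite linear combination of products $\prod_l W_N(\Delta_{i_l})$ over \emph{distinct} intervals; disjointness of the intervals forces distinct summation indices, reproducing exactly the diagonal-free structure of the Wiener--It\^o integral. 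Applying the rephrasing in \cref{Rem:Brownian measure}, the increments $(W_N(\Delta_i))_i$ converge jointly with $(\mathbf{Y}_{N,S_1}^{(m)},\mathbf{Y}_{N,S_2}^{(m)})$ to $(W(\Delta_i))_i$ jointly with $(\mathbf{W},\mathbf{B}^{(m)})$, with $\mathbf{B}^{(m)}$ independent of the increments. The continuous mapping theorem applied to the polynomial expressing $Q_{k_p}$ then gives $\mathbf{Q}_N\ConvFDD \mathbf{I}^{(M)}:=(I_{k_p}(f_p^{(M)}))_p$ jointly, with $\mathbf{I}^{(M)}$ built from the same $W$ as $\mathbf{W}$, and $\mathbf{B}^{(m)}$ independent of $(\mathbf{W},\mathbf{I}^{(M)})$. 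This settles (\ref{eq:S_1 S_2 I}) for simple kernels.

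\emph{Stage 2 ($L^2$ approximation).} For general $f_p\in L^2(\mathbb{R}^{k_p})$, choose simple $f_p^{(M)}\in S_M(\mathbb{R}^{k_p})$ with $\|f_p^{(M)}-f_p\|_{L^2}\to 0$ and let $h_{p,N}^{(M)}$ be the corresponding Stage-1 discretizations. The $L^2$-isometry of the Wiener--It\^o integral gives $\mathbf{I}^{(M)}\to\mathbf{I}$ in $L^2$ as $M\to\infty$. For the discrete forms I would invoke the near-isometry for $Q_k$: up to diagonal corrections that are asymptotically negligible because the kernels vanish on the diagonals, $\E[(Q_{k_p}(h)-Q_{k_p}(g))^2]$ is comparable to $\|\tilde h-\tilde g\|_{L^2(\mathbb{R}^{k_p})}^2$. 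Combined with the triangle inequality and hypothesis (\ref{eq:kernelConv})---since $\tilde h_{p,N}\to f_p$ and $\tilde h_{p,N}^{(M)}\to f_p^{(M)}$ as $N\to\infty$---this yields
\[
\limsup_{N\to\infty}\E\big[(Q_{k_p}(h_{p,N})-Q_{k_p}(h_{p,N}^{(M)}))^2\big]\ \lesssim\ \|f_p-f_p^{(M)}\|_{L^2(\mathbb{R}^{k_p})}^2\xrightarrow[M\to\infty]{}0 .
\]
Thus the approximating vector $\mathbf{Q}_N^{(M)}$ is $L^2$-close to $\mathbf{Q}_N$ uniformly in $N$. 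Combining Stage 1 (convergence for each fixed $M$), the $L^2$ convergence $\mathbf{I}^{(M)}\to\mathbf{I}$, and this uniform $L^2$ closeness (with Chebyshev converting the $L^2$ bounds into smallness in probability), the standard approximation scheme for convergence in distribution (as in \citep{giraitis2009large} Ch.~14.3) upgrades the convergence to the full kernels, giving (\ref{eq:S_1 S_2 I}); independence of $\mathbf{B}^{(m)}$ from $(\mathbf{W},\mathbf{I})$ passes to the limit since it holds at every stage.

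The main obstacle will be Stage 2: the uniform-in-$N$ control of the diagonal corrections in the isometry for $Q_k$, and the careful matching of the discrete $\ell^2$ norm of $h_{p,N}-h_{p,N}^{(M)}$ with the $L^2(\mathbb{R}^{k_p})$ norm of $\tilde h_{p,N}-\tilde h_{p,N}^{(M)}$ under the $N^{k_p/2}$ normalization in (\ref{eq:h LRD}). One must verify that the ``pairing'' terms beyond the leading isometric term are $o(1)$ uniformly in both $N$ and $M$, so that the $\limsup_N$ bound above is legitimate; this is where the vanishing of the kernels on the diagonals and the finiteness of the relevant moments of $\{\epsilon_i\}$ enter. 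The bookkeeping in Stage 1---that the disjoint-interval products reproduce exactly the diagonal-free polynomial form with no spurious diagonal mass from boundary coincidences---is a secondary, more routine point.
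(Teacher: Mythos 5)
Your proposal follows essentially the same route as the paper's proof: reduction to simple kernels in $S_M(\mathbb{R}^{k_p})$, joint convergence for those by writing $Q_{k_p}$ as a polynomial in the empirical-measure increments $W_N(\Delta_i)$ and invoking \cref{Lem:SRD and W} with \cref{Rem:Brownian measure} plus the Continuous Mapping Theorem, followed by a triangular $L^2$ approximation argument (the paper packages the approximation facts as (14.3.14)--(14.3.16) of \citep{giraitis2009large} and applies Cram\'er--Wold before the triangular step). The only substantive remark is that the obstacle you flag in Stage 2 is not actually one: since both $h_{p,N}$ and the discretized simple kernels vanish on the diagonals, one has exactly $\E\left[Q_{k}(h-g)^2\right]\le k!\,\|h-g\|_{\ell^2}^2$ using only the unit variance of $\epsilon_i$ (no higher moments, no diagonal corrections), and $\|h-g\|_{\ell^2}=\|\tilde h-\tilde g\|_{L^2(\mathbb{R}^{k})}$ holds exactly under the $N^{k/2}$ normalization in (\ref{eq:h LRD}), so the uniform-in-$N$ bound you want is immediate.
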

\begin{Rem}
Observe that
$\mathbf{B}^{(m)}$ is independent of $(\mathbf{W},\mathbf{I})$.
\end{Rem}
\begin{proof}
The lemma is proved by combining \cref{Lem:SRD and W} with the proof of Proposition 14.3.2 of \citep{giraitis2009large}.
By Cram\'er-Wold, we need to show that for any $\mathbf{a}\in \mathbb{R}^{J_{S_1}}$, $\mathbf{b}\in \mathbb{R}^{J_{S_2}}$ and $\mathbf{c}\in \mathbb{R}^{J_{L}}$, as $N\rightarrow\infty$,
\begin{align}\label{eq:target}
<\mathbf{a},\mathbf{Y}^{(m)}_{N,S_1}(t)>+<\mathbf{b},\mathbf{Y}^{(m)}_{N,S_2}(t)>+<\mathbf{c},\mathbf{Q}_N>\ConvFDD
<\mathbf{a},\mathbf{W}(t)>+<\mathbf{b},\mathbf{B}^{(m)}(t)>+<\mathbf{c},\mathbf{I}>,
\end{align}
where $<.,.>$ denotes the Euclidean inner product.

Next following the approximation argument that leads to (14.3.14), (14.3.15) and (14.3.16) in \citep{giraitis2009large}, one can show that for any $\epsilon>0$, there exists $M>0$ and simple functions $f_{p,\epsilon}\in S_M(\mathbb{R}^{k_p})$, $p=1,\ldots,J_L$, such that for all $N\ge N_0(\epsilon)$ where $N_0(\epsilon)$ is large enough,
\begin{align}
&\|Q_{k_p}(h_{p,N})-Q_{k_p}(h_{p,\epsilon,N}))\|_{L^2(\Omega)}\le \epsilon,\label{eq:approx 1}\\
&Q_{k_p}(h_{p,\epsilon,N})\ConvD I_{k_p}(f_{p,\epsilon})\quad\text{as } N\rightarrow \infty,\label{eq:approx 2}\\
&\|I_{k_p}(f_{p,\epsilon})-I_{k_p}(f_p)\|_{L^2(\Omega)}\le \epsilon\label{eq:approx 3},
\end{align}
where $\|.\|_{L^2(\Omega)}$ denotes the $L^2(\Omega)$ norm,
\[
h_{p,\epsilon,N}(j_1,\ldots,j_{k_p}):=N^{-k_p/2}f_{p,\epsilon}(\frac{j_1}{N},\ldots,\frac{j_{k_p}}{N}).
\]
Set
\[\mathbf{Q}_{\epsilon,N}:=\Big(Q_{k_p}(h_{p,\epsilon,N})\Big)_{p=1,\ldots,J_L}\]
and
\[
\mathbf{I}_\epsilon:=\left(I_{k_p}(f_{p,\epsilon})\right)_{p=1,\ldots,J_L}.
\]
Now note that  $Q_{k_p}(h_{p,\epsilon,N})$ is a multivariate polynomial (thus is a continuous function) of random variables of the form $W_N(\Delta_i)$ where $\Delta_i$'s are disjoint finite intervals and $W_N(.)$ is the empirical random measure as given in \cref{Rem:Brownian measure}.  So by \cref{Lem:SRD and W} (with \cref{Rem:Brownian measure})  and the Continuous Mapping Theorem, we have that as $N\rightarrow\infty $,
\begin{align}\label{eq:approxCW2}
<\mathbf{a},\mathbf{S}^{(m)}_{N,1}(t)>+<\mathbf{b},\mathbf{S}^{(m)}_{N,2}(t)>+<\mathbf{c},\mathbf{Q}_{\epsilon,N}>\ConvFDD
<\mathbf{a},\mathbf{W}(t)>+<\mathbf{b},\mathbf{B}^{(m)}(t)>+<\mathbf{c},\mathbf{I}_\epsilon>.
\end{align}
By (\ref{eq:approx 1}) and the Cauchy-Schwartz inequality, we infer that
\begin{align}\label{eq:approxCW1}
\|\left(<\mathbf{c},\mathbf{Q}_{N}-\mathbf{Q}_{\epsilon,N}>\right)\|_{L^2(\Omega)}\le  \|\mathbf{c}\| \|\mathbf{Q}_{N}-\mathbf{Q}_{\epsilon,N}\|_{L^2(\Omega)}\le  \|\mathbf{c}\|\sqrt{J_L}\epsilon,
\end{align}
where $\|.\|$ denotes the Euclidean norm. Similarly using (\ref{eq:approx 3}),
\begin{align}\label{eq:approxCW3}
\|\left(<\mathbf{c},\mathbf{I}-\mathbf{I}_{\epsilon}>\right)\|_{L^2(\Omega)}\le  \|\mathbf{c}\| \|\mathbf{I}-\mathbf{I}_{\epsilon}\|_{L^2(\Omega)}\le  \|\mathbf{c}\|\sqrt{J_L}\epsilon .
\end{align}

We now apply a usual triangular approximation argument (e.g., Lemma 4.2.1 of \citep{giraitis2009large}). Let
\begin{align*}
U_N^{(m)}(t)&=<\mathbf{a},\mathbf{Y}^{(m)}_{N,S_1}(t)>+<\mathbf{b},\mathbf{Y}^{(m)}_{N,S_2}(t)>+<\mathbf{c},\mathbf{Q}_{N}>,\\
U_{N,\epsilon}^{(m)}(t)&=<\mathbf{a},\mathbf{Y}^{(m)}_{N,S_1}(t)>+<\mathbf{b},\mathbf{Y}^{(m)}_{N,S_2}(t)>+<\mathbf{c},\mathbf{Q}_{\epsilon,N}>,\\
U_\epsilon^{(m)}(t)&=<\mathbf{a},\mathbf{W}(t)>+<\mathbf{b},\mathbf{B}^{(m)}(t)>+<\mathbf{c},\mathbf{I}_\epsilon>,\\
U^{(m)}(t)&=<\mathbf{a},\mathbf{W}(t)>+<\mathbf{b},\mathbf{B}^{(m)}(t)>+<\mathbf{c},\mathbf{I}>.
\end{align*}
By (\ref{eq:approxCW2}),  (\ref{eq:approxCW3}) and (\ref{eq:approxCW1}), we have that
\begin{align*}
&U_{N,\epsilon}^{(m)}(t)\ConvFDD X_{\epsilon}^{(m)}(t)\text{ as }N\rightarrow \infty,\\
&U_{\epsilon}^{(m)}(t)\ConvFDD X^{(m)}(t) \text{ as } \epsilon\rightarrow 0,\\
&\underset{ \epsilon\rightarrow 0}{\lim} \underset{N\rightarrow\infty}{\limsup} \|U_N^{(m)}(t)-U_{N,\epsilon}^{(m)}(t)\|_{L^2(\Omega)}=0,~ \forall~ t\ge 0,
\end{align*}
which implies  $U^{(m)}_N(t)\ConvFDD U^{(m)}(t)$,
proving (\ref{eq:target}).
\end{proof}

The next lemma gets rid of the $m$-truncation.
\begin{Lem}\label{Lem:final}
\cref{Lem:2} holds with the m-truncated normalized partial sums $\mathbf{Y}^{(m)}_{N,S_i}(t),~i=1,2$ replaced with the non-truncated ones: $$\mathbf{Y}_{N,S_i}(t)=\left(\frac{1}{\sigma_{j,S_i}\sqrt{N}}\sum_{n=1}^{[Nt]}X_{j,S_i}(n)\right)_{j=1,\ldots,J_i},~i=1,2, $$
where $X_{j,S_i}(n)$ is the non-truncated multilinear polynomial-form process corresponding to the component of $\mathbf{Y}_{N,S_i}$ in \cref{Thm:SRD&LRD}, $\sigma_{j,S_i}:=\sum_n\Cov(X_{j,S_i}(n),X_{j,S_i}(0))$
and the limit  $\mathbf{B}^{(m)}(t)$ is replaced  by  $\mathbf{B}(t)$, that is, as $N\rightarrow\infty$,
\begin{align}\label{eq:LemFinal}
\Big(\mathbf{Y}_{N,S_1}(t), \mathbf{Y}_{N,S_2}(t),\mathbf{Q}_N\Big) \ConvFDD \Big(\mathbf{W}(t), \mathbf{B}(t),\mathbf{I}\Big),
\end{align}
where $\mathbf{W}(t)=\left(W(t),\ldots,W(t)\right)$, $\mathbf{B}(t)=\left(B_1(t),\ldots,B_{J_{S_2}}(t)\right)$ are as given in \cref{Thm:SRD&LRD}.
\end{Lem}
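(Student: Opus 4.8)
The plan is to deduce \cref{Lem:final} from \cref{Lem:2} by a triangular (double-limit) approximation in which the truncation level $m$ plays the role of the auxiliary parameter, exactly of the type used to pass from the truncated to the non-truncated process in the proof of \cref{Thm:PureSRD}. By the Cram\'er-Wold device it suffices to treat, for fixed $\mathbf{a}\in\mathbb{R}^{J_{S_1}}$, $\mathbf{b}\in\mathbb{R}^{J_{S_2}}$ and $\mathbf{c}\in\mathbb{R}^{J_L}$, the scalar combinations
\[
V_N(t)=<\mathbf{a},\mathbf{Y}_{N,S_1}(t)>+<\mathbf{b},\mathbf{Y}_{N,S_2}(t)>+<\mathbf{c},\mathbf{Q}_N>,
\]
\[
V_N^{(m)}(t)=<\mathbf{a},\mathbf{Y}_{N,S_1}^{(m)}(t)>+<\mathbf{b},\mathbf{Y}_{N,S_2}^{(m)}(t)>+<\mathbf{c},\mathbf{Q}_N>,
\]
together with their candidate limits $V(t)=<\mathbf{a},\mathbf{W}(t)>+<\mathbf{b},\mathbf{B}(t)>+<\mathbf{c},\mathbf{I}>$ and $V^{(m)}(t)=<\mathbf{a},\mathbf{W}(t)>+<\mathbf{b},\mathbf{B}^{(m)}(t)>+<\mathbf{c},\mathbf{I}>$. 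The target is $V_N(t)\ConvFDD V(t)$, which by Cram\'er-Wold is exactly (\ref{eq:LemFinal}).

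The argument rests on three inputs. First, \cref{Lem:2} already delivers $V_N^{(m)}(t)\ConvFDD V^{(m)}(t)$ as $N\to\infty$ for each fixed $m$. Second, letting $m\to\infty$ in the limit object: since $\mathbf{W}(t)$ and $\mathbf{I}$ do not depend on $m$, $\mathbf{B}^{(m)}(t)$ is independent of $(\mathbf{W}(t),\mathbf{I})$, and $\mathbf{B}^{(m)}(t)$ is Gaussian with covariance (\ref{eq:TruncCrossCov}), the convergences $\sigma_{j,S_i}^{(m)}\to\sigma_{j,S_i}$ and $\sigma_{p,q}^{(m)}\to\sigma_{p,q}$ from (\ref{eq:Sigma Conv}) force the covariance of $\mathbf{B}^{(m)}(t)$ to converge to that of $\mathbf{B}(t)$ in (\ref{eq:SRDCov}); convergence of covariances for jointly Gaussian vectors, the independence from the fixed pair $(\mathbf{W}(t),\mathbf{I})$, then gives $V^{(m)}(t)\ConvFDD V(t)$ as $m\to\infty$. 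Third, and this is the crucial structural observation, the LRD block $\mathbf{Q}_N$ is identical in $V_N$ and $V_N^{(m)}$ and hence cancels in the difference, leaving
\[
V_N(t)-V_N^{(m)}(t)=<\mathbf{a},\mathbf{Y}_{N,S_1}(t)-\mathbf{Y}_{N,S_1}^{(m)}(t)>+<\mathbf{b},\mathbf{Y}_{N,S_2}(t)-\mathbf{Y}_{N,S_2}^{(m)}(t)>.
\]
Applying Cauchy-Schwarz componentwise and invoking the uniform-in-$N$ truncation estimate (\ref{eq:Trunc Ori}) established in the proof of \cref{Thm:PureSRD}, one obtains $\lim_{m\to\infty}\limsup_{N\to\infty}\|V_N(t)-V_N^{(m)}(t)\|_{L^2(\Omega)}=0$ for every $t\ge 0$.

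With these three inputs I would invoke the standard triangular approximation lemma (e.g.\ Lemma 4.2.1 of \citep{giraitis2009large}, already used in the proof of \cref{Lem:2}): the $N\to\infty$ convergence of $V_N^{(m)}$ to $V^{(m)}$, the $m\to\infty$ convergence of $V^{(m)}$ to $V$, and the vanishing double limit of the $L^2$ difference together yield $V_N(t)\ConvFDD V(t)$. I expect the main difficulty to be bookkeeping rather than conceptual: the key is to confirm that the cancellation of $\mathbf{Q}_N$ is genuine (it is, since no truncation is applied to the LRD block), so that no new control on the Wiener-It\^o part is needed, and that the SRD truncation bound (\ref{eq:Trunc Ori}), proved for pure-SRD partial sums, transfers verbatim to the $S_1$ and $S_2$ blocks because they are built by precisely the same truncation mechanism. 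The one mild subtlety is that $Y_{N,j,i}$ and $Y_{N,j,i}^{(m)}$ carry different normalizations $\sigma_{j,S_i}$ versus $\sigma_{j,S_i}^{(m)}$, but (\ref{eq:Trunc Ori}) is stated with exactly these normalizations, so the discrepancy is already absorbed.
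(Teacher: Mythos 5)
Your proposal is correct and follows essentially the same route as the paper's own proof: a triangular approximation in $m$, with \cref{Lem:2} giving the $N\rightarrow\infty$ step, the covariance convergence (\ref{eq:Sigma Conv}) giving convergence of the Gaussian block $\mathbf{B}^{(m)}(t)$ to $\mathbf{B}(t)$ (hence of the limit vector), and an $L^2$ truncation estimate controlling $\lim_{m}\limsup_{N}$ of the difference, in which $\mathbf{Q}_N$ cancels exactly as you observe. The only cosmetic difference is that the paper invokes the argument leading to (4.8.7) of \citep{giraitis2009large} for this last estimate, while you invoke the equivalent uniform-in-$N$ bound (\ref{eq:Trunc Ori}) already recorded in the proof of \cref{Thm:PureSRD}.
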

\begin{proof}

We apply again the triangular argument at the end of the proof of \cref{Lem:2} above, but now with $m\rightarrow\infty$, namely, to show $U_N(t)\ConvFDD U(t)$, we show
\begin{align*}
&U_N^{(m)}(t)\ConvFDD U^{(m)}(t)\text{ as }N\rightarrow \infty,\\
&U^{(m)}(t)\ConvFDD U(t) \text{ as } m\rightarrow \infty,\\
&\underset{ m\rightarrow \infty}{\lim} \underset{N\rightarrow\infty}{\limsup} \|U_N^{(m)}(t)-U_{N}(t)\|_{L^2(\Omega)}=0,~ \forall~ t\ge 0,
\end{align*}
 The first step follows from \cref{Lem:2}. The second follows from (\ref{eq:Sigma Conv}) since that relation implies that the Gaussian vector   $(\mathbf{W},\mathbf{B}^{(m)}(t))$   converges to $(\mathbf{W},\mathbf{B}(t))$. For the last step, apply the argument leading to (4.8.7) of \citep{giraitis2009large} and hence for any $t\ge 0$ as $N\rightarrow\infty$,
\begin{equation}\label{eq:m->inf}
\|Y^{(m)}_{N,j,i}(t)-Y_{N,j,i}(t)\|_{L^2(\Omega)}\rightarrow 0,~j=1,\ldots,J_{S_i},~i=1,2.
\end{equation}
\end{proof}

Now we prove \cref{Thm:SRD&LRD}:
\begin{proof}[Proof of \cref{Thm:SRD&LRD}]
In view of \cref{Lem:final}, it is only necessary to verify that the assumption on $\mathbf{Q}_N$ are satisfied, that is, we  now focus on the LRD component: $$\mathbf{Y}_{N,L}(t)=\left(\frac{1}{A_{p,L}(N)}\sum_{n=1}^{[Nt]}X_{p,L}(n)\right)_{p=1,\ldots, J_L}$$ in \cref{Thm:SRD&LRD}.
Choose as kernels $h_{p,N}$ in (\ref{eq:h LRD}) those obtained from  $\mathbf{Y}_{N,L}$, that is,
\[
h^{(t)}_{p,N}(s_1,\ldots,s_{k_{p,L}})=c(p,N) N^{-1+k_{p,L}(1/2-d_{p,L})}\sum_{n=1}^{[Nt]} \prod_{i=1}^{k_{p,L}} a_{n-s_i,p},
\]
where $c(p,N)>0$ is some normalization constant. By Theorem 4.8.2 of \citep{giraitis2009large}, (\ref{eq:kernelConv}) holds and so therefore does \cref{Lem:final}. This concludes the proof of \cref{Thm:SRD&LRD}.
\end{proof}

\subsection{Weak convergence in $D[0,1]^J$}
We first state a lemma which will be used to prove case \ref{case:multilinear}.
\begin{Lem}\label{Lem:HyperContract}
Let $Q_k(h)$ be a polynomial form defined in (\ref{eq:Q_k(h)}). If  \begin{equation}\label{eq:abs sum h}
\sum_{i_1,\ldots,i_k}|h(i_1,\ldots,i_k)|<\infty,
\end{equation}
and $E(|\epsilon_i|^5)<\infty$,  then we have the following hypercontractivity inequality:
\begin{align}\label{eq:hypercontract discrete}
E\left(Q_k(h)^4\right)\le c E\left(Q_k(h)^2\right)^2,
\end{align}
where $c=\left(3+2E(\epsilon_i^4)\right)^{2k}$.
\end{Lem}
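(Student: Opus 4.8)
The plan is to prove \eqref{eq:hypercontract discrete} by expanding the fourth moment over the $4k$ summation indices and organizing the terms according to the partition they induce. First I would make two reductions. Since $Q_k(h)=Q_k(\tilde h)$ for the symmetrization $\tilde h$, and $\|\tilde h\|_{\ell^2}\le\|h\|_{\ell^2}$, $\|\tilde h\|_{\ell^1}\le\|h\|_{\ell^1}$, I may assume $h$ is symmetric. Then, approximating $h$ by finitely supported symmetric kernels $h^{(n)}\to h$ in both $\ell^1$ and $\ell^2$, all the multiple sums below become finite and the diagram manipulations are rigorous; here the hypotheses $\sum|h|<\infty$ and $E(|\epsilon_i|^5)<\infty$ are what let me pass to the limit, since the inequality applied to each $h^{(n)}$ gives a uniform $L^4$-bound, hence uniform integrability and convergence of the fourth moments. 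Thus it suffices to establish \eqref{eq:hypercontract discrete} for finitely supported symmetric $h$.

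For such $h$ I would expand
\[
E(Q_k(h)^4)=\sum h(\mathbf i^{(1)})h(\mathbf i^{(2)})h(\mathbf i^{(3)})h(\mathbf i^{(4)})\,E\Big[\prod_{r=1}^4\prod_{l=1}^k\epsilon_{i^{(r)}_l}\Big],
\]
where each $\mathbf i^{(r)}=(i^{(r)}_1,\dots,i^{(r)}_k)$ has distinct entries because $h$ vanishes on the diagonals. I would classify the terms by the partition $\pi$ of the $4k$ slots according to equality of index values. Since $E(\epsilon_i)=0$ and the $\epsilon_i$ are independent, only partitions all of whose blocks have size $\ge 2$ contribute; and since the $k$ indices inside each factor are distinct, every block meets each of the four factors at most once, so each block has size in $\{2,3,4\}$. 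The contribution of a fixed $\pi$ factors as $\big(\prod_{B\in\pi}\mu_{|B|}\big)\,\Sigma_\pi$, where $\mu_2=1$, $\mu_3=E(\epsilon_i^3)$, $\mu_4=E(\epsilon_i^4)$, and $\Sigma_\pi=\sum\prod_{r=1}^4|h(\mathbf i^{(r)})|$ over index assignments compatible with $\pi$ (passing to absolute values only weakens the estimate). For the moment factors I would use $1\le E(\epsilon_i^4)$ and $|E(\epsilon_i^3)|\le E(\epsilon_i^4)$, both consequences of $E(\epsilon_i^2)=1$ and Lyapunov, so that $\prod_{B}|\mu_{|B|}|\le E(\epsilon_i^4)^{\#\{B:\,|B|\ge 3\}}$.

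The spatial factor is governed by the key estimate $\Sigma_\pi\le\|h\|_{\ell^2}^4$ for every admissible $\pi$, which I would prove by iterated Cauchy--Schwarz. Summing out the index attached to a chosen block, the factors containing it are bounded by their $\ell^2$-mass in that variable; this produces nonnegative lower-order ``partial mass'' tensors $g(\cdot)=\sum_z h(\cdots,z)^2$, and repeating the procedure and invoking the elementary inequalities $\mathrm{tr}(M^p)\le(\mathrm{tr}\,M^2)^{p/2}$ for symmetric nonnegative $M$ and $\sum G^2\le(\sum G)^2$ for nonnegative arrays collapses every contraction down to $\|h\|_{\ell^2}^4$. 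Combining this with the identity $E(Q_k(h)^2)=k!\,\|h\|_{\ell^2}^2$ (valid for symmetric $h$ vanishing on diagonals), I obtain $\|h\|_{\ell^2}^4=(k!)^{-2}\,E(Q_k(h)^2)^2$, and hence
\[
E(Q_k(h)^4)\le\Big(\sum_\pi\prod_{B\in\pi}|\mu_{|B|}|\Big)\|h\|_{\ell^2}^4
=(k!)^{-2}\Big(\sum_\pi\prod_{B\in\pi}|\mu_{|B|}|\Big)E(Q_k(h)^2)^2 .
\]

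It then remains to bound the weighted partition count $\sum_\pi\prod_B|\mu_{|B|}|$ by $(3+2E(\epsilon_i^4))^{2k}(k!)^2$. The crucial point is that the blocks are \emph{transversal}, meeting each of the four factors at most once---exactly what $h$ vanishing on diagonals buys us---so this count is of order $C^{2k}(k!)^2$ rather than the much larger $(4k-1)!!$ one would get without transversality; the three ways of grouping the four factors into two pairs produce the ``$3$'', while blocks of size $\ge 3$ contribute the powers of $E(\epsilon_i^4)$. I expect the main obstacle to be twofold. First, proving $\Sigma_\pi\le\|h\|_{\ell^2}^4$ \emph{uniformly} over the higher-coincidence partitions: a size-$3$ or size-$4$ block forces a product of three or four factors through a single summation variable, so the Cauchy--Schwarz scheme must be arranged carefully, for instance by first bounding one such factor pointwise by its $\ell^2$-mass before contracting the rest. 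Second, the purely combinatorial bookkeeping of the transversal complete partitions, weighted by the moment factors, needed to arrive at the precise constant $(3+2E(\epsilon_i^4))^{2k}$.
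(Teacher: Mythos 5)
Your outer reduction (symmetrize, approximate by finitely supported kernels, pass to the limit) is the same scaffolding the paper uses, but the core of the lemma is the inequality \eqref{eq:hypercontract discrete} for finitely supported kernels, and there your proposal has a genuine gap. The paper never proves that inequality: it quotes it, with exactly the constant $c=\left(3+2E(\epsilon_i^4)\right)^{2k}$, as inequality (11.4.1) of \citep{nourdin2012normal}. You propose instead to re-derive it by expanding $E\left(Q_k(h)^4\right)$ over partitions of the $4k$ index slots, but the two steps that carry all of the mathematical content --- (i) the uniform diagram estimate $\Sigma_\pi\le\|h\|_{\ell^2}^4$ over every complete transversal partition, including those with blocks of size $3$ and $4$, and (ii) the weighted count $\sum_\pi\prod_{B\in\pi}|\mu_{|B|}|\le\left(3+2E(\epsilon_i^4)\right)^{2k}(k!)^2$ --- are exactly the ones you defer as ``expected obstacles''. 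Neither is routine. For (i), your contraction scheme is convincing for pairings, but you yourself note that size-$3$ and size-$4$ blocks force the Cauchy--Schwarz steps to be ``arranged carefully'', and that arrangement is not supplied. For (ii), the clean constant in \citep{nourdin2012normal} is produced by an induction on the order $k$, not by diagram bookkeeping, and it is neither shown nor obvious that your weighted partition count stays below it. As written, the proposal is a plan for reproving the cited inequality rather than a proof; either carry out (i) and (ii) in full, or do what the paper does and invoke the known finite-support result, keeping only the approximation step for yourself.

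There is also a misstep in your limit passage, though it is fixable. You claim that the inequality applied to each $h^{(n)}$ gives a uniform $L^4$ bound, ``hence uniform integrability and convergence of the fourth moments''. A uniform $L^4$ bound yields uniform integrability of the squares $Q_k(h^{(n)})^2$ (so second moments converge), but not of the fourth powers, so it does not give convergence of fourth moments. This is precisely where the paper uses the hypotheses \eqref{eq:abs sum h} and $E(|\epsilon_i|^5)<\infty$: together they give a uniform fifth-moment bound on the truncated forms, hence uniform integrability of the fourth powers. Alternatively, you can avoid fourth-moment convergence altogether: since $Q_k(h^{(n)})\rightarrow Q_k(h)$ in $L^2$, Fatou along an a.s.\ convergent subsequence gives $E\left(Q_k(h)^4\right)\le\liminf_n E\left(Q_k(h^{(n)})^4\right)$, while $E\left(Q_k(h^{(n)})^2\right)\rightarrow E\left(Q_k(h)^2\right)$, and the inequality for $h$ follows. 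Either repair works; the mechanism you state does not.
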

\begin{proof}
 Let $h_M$ be the truncated version of $h$, that is, $$h_M(i_1,\ldots,i_k)=h(i_1,\ldots,i_k)\mathbf{1}_{\{i_1\le M,\ldots,i_k\le M\}}(i_1,\ldots,i_k).$$
By the absolute summability of $h$, we have $\E\left(|Q_k(h_M)-Q_k(h)|\right)\le (E|\epsilon_i|)^k\sum_{i_1> M,\ldots,i_k> M}|h(i_1,\ldots,i_k)|\rightarrow 0$ as $M\rightarrow\infty$, and thus
\begin{equation}\label{eq:Q_k(h_M)->Q_k(h)}
Q_k(h)\ConvD Q_k(h).
\end{equation}
By (11.4.1) of \citep{nourdin2012normal}, we have for $M\ge k$,
\begin{align}\label{eq:hypercontrac M}
E\left(Q_k(h_M)^4\right)\le \left(3+2E(\epsilon_i^4)\right)^{2k} E\left(Q_k(h_M)^2\right)^2
\end{align}
In addition,
\begin{equation}\label{eq:Q^5 bound}
\E\left(|Q_k(h_M)|^5\right)\le A\left(\sum_{i_1,\ldots,i_k} |h(i_1,\ldots,i_k)|\right)^5<\infty,
\end{equation}
where $A>0$ is a constant accounting for the product of absolute moments of $\{\epsilon_i\}$. Note that since $h$ vanishes on the diagonals $i_p=i_q$ when $p\neq q$,   there is no moment-order higher than  $5$ involved there.

Finally, (\ref{eq:Q^5 bound}) implies that $\{Q_k(h_M)^4, M\ge 1\}$ and $\{Q_k(h_M)^2, M\ge1\}$ are uniformly integrable, and this combined with (\ref{eq:Q_k(h_M)->Q_k(h)}) and (\ref{eq:hypercontrac M}) yields (\ref{eq:hypercontract discrete}).
\end{proof}

\begin{proof}[Proof of \cref{Thm:WeakConv}]
Convergence in finite-dimensional distributions follows from
\cref{Thm:PureSRD}, \cref{Thm:PureLRD} and \cref{Thm:SRD&LRD}, so we are left to show tightness in $D[0,1]^J$. Since univariate tightness implies the multivariate tightness in the product space (Lemma 3.10 of \citep{taqqu2012multivariate}), we only need to show that each  $\{Y_{j,N}(t), N\ge 1\}$ in (\ref{eq:Y_j,N}) is tight with respect to the uniform metric. If $X_j(n)$ is LRD, the tightness is shown in Theorem 4.8.2 of \citep{giraitis2009large}. We  only need to treat the SRD case.

Suppose that $X(n)$ is a process defined in (\ref{eq:Def Poly Process}) which is SRD.

In case \ref{case:m-dependent} of \cref{Thm:WeakConv}, note that $X_n$ is now a stationary $m$-dependent sequence, so the weak convergence of $S_N(t)$ to Brownian motion, which includes tightness, is classical (\citep{billingsley1956invariance} Theorem 5.2).

Consider next case \ref{case:gaussian}. Because $\epsilon_i$ are i.i.d.\ Gaussian, $X(n)$ belongs to the $k$-th Wiener chaos, or say, can be written as a multiple Wiener-It\^o integral of order $k$ (see, e.g., \citep{nourdin2012normal} Chapter 2.2 and Chapter 2.7). Since the $k$-th Wiener chaos is a linear space,  $Y_N(t):=\frac{1}{\sqrt{N}}\sum_{n=1}^{[Nt]}X(n)$ also belongs to the $k$-th Wiener chaos, and so does $Y_N(t)-Y_N(s)$ for any $0\le s<t$. By the hypercontractivity inequality (Theorem 2.7.2 in \citep{nourdin2012normal}), we have
\begin{equation}\label{eq:hypercontract}
E[|Y_N(t)-Y_N(s)|^4]\le  c E[|Y_N(t)-Y_N(s)|^2]^2,
\end{equation}
where $c$ is some constant which doesn't depend on $s,t$ or $N$. Note that $\sum_n |\gamma(n)|<\infty$  due to SRD assumption, we have
\begin{align}\label{eq:secondmoment}
&E[|Y_N(t)-Y_N(s)|^2]=\frac{1}{N}E[|\sum_{n=1}^{[Nt]-[Ns]}X(n)|^2]
\notag\\=& \frac{[Nt]-[Ns]}{N}\sum_{n=-([Nt]-[Ns])}^{[Nt]-[Ns]}\left(1-\frac{|n|}{[Nt]-[Ns]}\right)\gamma(n)\le \frac{[Nt]-[Ns]}{N}\sum_{n=-\infty}^\infty |\gamma(n)|.
\end{align}
Combining (\ref{eq:hypercontract}) and (\ref{eq:secondmoment}), we  have for some constant $C>0$ that
\[
E[|Y_N(t)-Y_N(s)|^4]\le cE[|Y_N(t)-Y_N(s)|^2]^2\le C |F_N(t)-F_N(s)|^2,
\]
where $F_N(t)=[Nt]/N$. Now by applying Lemma 4.4.1 and Theorem 4.4.1 of \citep{giraitis2009large}, we conclude that tightness holds.

Case \ref{case:linear} is shown by Proposition 4.4.4 of \citep{giraitis2009large} with $H=1/2$.

For case \ref{case:multilinear}, for $s<t$,
$$
\frac{1}{A(N)}\sum_{n=1}^{[Nt]-[Ns]}X(n)=\sum_{1\le i_1<\ldots<i_k<\infty} \left(\frac{1}{A(N)} \sum_{n=1}^{[Nt]-[Ns]}a_{n-i_1}\ldots a_{n-i_k}\right) \epsilon_{i_1}\ldots\epsilon_{i_k},
$$
Thus \cref{Lem:HyperContract} applies with $h(i_1,\ldots,i_k)=\frac{1}{A(N)}\sum_{n=1}^{[Nt]-[Ns]}a_{n-i_1}\ldots a_{n-i_k}$ since (\ref{eq:abs sum h}) holds due to the assumption $\sum_{i\ge 1}|a_i|<\infty$. Tightness then follows by applying the same argument as in case \ref{case:gaussian}.
\end{proof}

\medskip

\noindent\textbf{Acknowledgments.} This work was partially supported by the NSF grant DMS-1007616 at Boston University.
\bibliographystyle{plainnat}

\bigskip

\noindent Shuyang Bai~~~~~~~ \textit{bsy9142@bu.edu}\\
Murad S. Taqqu ~~\textit{murad@bu.edu}\\
Department of Mathematics and Statistics\\
111 Cumminton Street\\
Boston, MA, 02215, US

\end{document}